\newcommand{\la}{\lambda}
\theoremstyle{plain}
\numberwithin{equation}{section}
\newtheorem{thm}{Theorem}[section]
\newtheorem{lem}[thm]{Lemma}
\newtheorem{prop}[thm]{Proposition}
\theoremstyle{definition}
\newtheorem{ip}[thm]{Inverse Problem}
\theoremstyle{remark}
\begin{document}
\begin{center}
{\large\bf Uniform stability of the inverse Sturm-Liouville problem\\[0.1cm] on a star-shaped graph}
\\[0.3cm]
{\bf Chitorkin E.E., Bondarenko N.P.} \\[0.2cm]
\end{center}

\vspace{0.5cm}

{\bf Abstract.}   In this paper, we study the inverse spectral problem for the Sturm-Liouville operators on a star-shaped graph, which consists in the recovery of the potentials from specral data or several spectra. The uniform stability of these inverse problems on the whole graph is proved.

\medskip

{\bf Keywords:} inverse spectral problem; Sturm--Liouville equation; uniform stability; star-shaped graph. 

\medskip

{\bf AMS Mathematics Subject Classification (2020):} 34A55 34B07 34B09 34B24 34L40    

\vspace{1cm}

\section{Introduction} \label{sec:intro}

Consider the star-shaped graph $G$, which consists of the vertices $\{v_j\}_{j=0}^{m}$ ($m \ge 1$) and the edges $\{ e_j \}_{j=1}^m$ of length $\pi$. Each edge $e_j$ joins the $\textit{boundary}$ vertex $v_j$ with the $\textit{internal}$ vertex $v_0$. We consider each edge $e_j$ as a segment parameterized by the corresponding variable $x_j \in [0, \pi]$. The value $x_j = 0$ corresponds to the vertex $v_j$ and $x_j = \pi$ to $v_0$.

In this paper, we consider the following Sturm--Liouville problem $L = L(\textbf{q})$ on the graph $G$:
\begin{gather} \label{eqv1}
-y_j''(x_j) + q_j(x_j) y_j(x_j) = \lambda y_j(x_j), \quad x_j \in (0, \pi), \quad j = \overline{1, m} \\ \label{bc1}
y_j(0) = 0, \quad j = \overline{1, m}, \\ \label{bc2}
y_j(\pi) = y_1(\pi), \quad j=\overline{2, m}, \\ \label{kc}
\sum\limits_{j=1}^{m} y_j'(\pi) = 0,
\end{gather}
where $\textbf{q} = [q_1(x_1), \dots, q_m(x_m)]$ is a vector of real-valued potentials of class $L_2(0, \pi)$, $\la$ is the spectral parameter.
   

The theory of inverse spectral problems has been most thoroughly developed for Sturm--Liouville operators on intervals, as documented in several monographs \cite{Lev84, FY01, Mar11, Krav20} and related references. For differential operators on metric graphs -- often termed $\textit{quantum graphs}$, which model wave dynamics in spatial networks -- an extensive literature exists due to their relevance in classical and quantum mechanics, organic chemistry, nanotechnology, waveguide theory, and various engineering applications \cite{Berk, Pok}. Subsequent work established uniqueness results for recovering Sturm--Liouville potentials on arbitrary trees (graphs without cycles) from various spectral data \cite{Bel, Brow, YuGr}. The spectral mapping approach \cite{YuGr} yielded constructive reconstruction procedures for tree-graphs and, later, for arbitrary compact graphs \cite{YuCg}. Comprehensive reviews of inverse spectral theory for quantum graphs are provided in \cite{YuSn, BelR, BondPart} and recent monographs \cite{Piv, Kur}.

Although uniqueness and reconstruction aspects have been widely studied for inverse spectral problems on graphs, stability has received considerably less attention, with only fragmentary results available for specific graph structures. It is worth mentioning that the uniform stability of the inverse Sturm-Liouville problems on a finite interval has been proved in \cite{SavShk10, SavShk13}. Inverse Sturm--Liouville problems on compact star-shaped graphs were studied in matrix form with demonstrated stability in \cite{BondConst, BondMatr}. Stability results for inverse scattering on the half-line with general self-adjoint boundary conditions \cite{XuBond} apply directly to star-shaped graphs with infinite rays. Additionally, uniform stability was proved for a nonlocal functional-differential operator on a metric graph \cite{ButGr}, though such nonlocal operators differ fundamentally from local differential operators and require distinct analytical methods. Uniform stability was also proved in \cite{Bond25cycle, Trooshin} and \cite{BondTree} for lasso-shaped graph and tree, respectively.

To date, a general framework for analyzing stability of inverse problems for differential operators on graphs of arbitrary structure remains absent. Nevertheless, recent numerical methods for reconstructing Sturm--Liouville operators on graphs \cite{KravAvd, AvdIP, AvdMmas} show promising effectiveness in computational experiments, indirectly suggesting stability of the corresponding inverse problems and motivating further analytical investigation.

This paper aims to prove the uniform stability of two inverse spectral problems for \eqref{eqv1}--\eqref{kc}. In contrast to the previous studies \cite{Bond25cycle, Trooshin, BondTree} on the uniform stability, we use different types of spectral data. Namely, we consider the reconstruction of the potentials (i) from $m$ spectra, (ii) from eigenvalues and weight numbers. Our approach relies on the method of spectral mappings \cite{FY01, YuGr, BondConst, Bond25cycle} and on the uniform stability of recovering analytic functions from their zeros, which was obtained in \cite{But}.

The paper is organized as follows. Firstly, in Section~\ref{sec:main}, we introduce the notations and formulate the main theorems. Next, in Section~\ref{sec:stab}, we prove the theorem on the uniform stability for the case of given eigenvalue sets. Finally, in Section~\ref{sec:bound}, we prove the theorem on the uniform stability for the case of given eigenvalues and weight numbers. 

\section{Main results} \label{sec:main}

In this section, we give some notations and formulate the main theorems of our paper.

At first, let us define the characteristic function $\Delta(\la)$ of the problem $L$, which zeros coincide with the eigenvalues of the problem. In \cite{Pivovar} it is shown that
\begin{gather} \notag
    \Delta(\la) = \sum\limits_{k=1}^{m} S'_k(\pi, \la)\prod\limits_{j=1, j\neq k}^{m} S_j(\pi, \la),
\end{gather}
where $S_j(x, \la)$ is the solution of the $j$-th equation \eqref{eqv1} that satisfies the initial conditions $S_j(0, \la) = 0$, $S'_j(0, \la) = 1$.

Due to \cite{Pivovar}, the spectrum $\Lambda$ of the problem \eqref{eqv1}-\eqref{kc} is a countable set of eigenvalues  $\{\la_{nk}\}_{n \ge 1, \ k=\overline{1, m}}$. For simplicity we assume that $\int\limits_{0}^{\pi}q_j(x_j) \, dx_j = 0$ for any $k=\overline{1, m}$. Then, for an appropriate numbering of the eigenvalues $\{ \la_{nk}\}_{n \ge 1, \, k = \overline{1,m}}$ (counting with multiplicities), the following asymptotic relations hold:
\begin{gather} \label{eigen_asymp}
    \rho_{nk} = \sqrt{\la_{nk}} = n + \dfrac{\kappa_{nk}}{n\pi}, \\
    \label{eigen_asymp_m}
    \rho_{nm} = \sqrt{\la_{nm}} = n - \dfrac{1}{2} + \dfrac{\kappa_{nm}}{n\pi},
\end{gather}
where $n \ge 1$, $k = \overline{1, m-1}$, $\{ \kappa_{nk} \} \in l_2$.

For $j \in \{ 1, 2, \dots, m-1\}$, denote by $\Lambda_j$ the spectrum of the problem similar to \eqref{eqv1}--\eqref{kc}, but with changed boundary condition at the $j$-th vertex: $y'_j(0) = 0$. The spectra $\Lambda_j$ are countable sets of eigenvalues, which coincide with the zeros of the corresponding characteristic functions
$$
\Delta_j(\la) = \Bigg( C_j(x, \la) \prod\limits_{n=1, n\neq j}^{m} S_n(x, \la) \Bigg)' \Bigg|_{x=\pi},
$$
where $C_j(x, \la)$ be the solution of the $j$-th equation \eqref{eqv1} satisfying the initial conditions $C_j(0, \la) = 1$, $C'_j(0, \la) = 0$.
Note that the spectra $\Lambda_j$ are countable sets of eigenvalues $\{ \theta_{nkj}^2\}_{n \ge 1, k = \overline{1,m}}$ (counting with multiplicities), which can be numbered according their asymptotic behavior (see \cite{Cheng12}):
\begin{gather} \notag
    \theta_{nkj} = 
n + \dfrac{\xi_{nkj}}{n\pi}, \\
\notag
\theta_{n, m-1, j} =  n - 1 + \dfrac{1}{\pi} \arccos{\dfrac{1}{\sqrt{m}}} + \dfrac{\xi_{n,m-1,j}}{n\pi}, \\
\notag
\theta_{nmj} = n - \dfrac{1}{\pi} \arccos{\dfrac{1}{\sqrt{m}}} + \dfrac{\xi_{nmj}}{n\pi},
\end{gather}
where $n \ge 1$, $ k=\overline{1, m-2}$, $ j=\overline{1, m-1}$, $\{ \xi_{nkj} \} \in l_2$.

We study the following inverse spectral problem.

\begin{ip} \label{ip2}
    Given $\Lambda$, $\Lambda_j$, $j = \overline{1, m-1}$, find $q_j(x)$, $j=\overline{1, m}$.
\end{ip}

The uniqueness theorem and a constructive procedure for solving Inverse Problem~\ref{ip2} have been obtained in \cite{YuGr}. In this paper, we focus on the uniform stability of Inverse Problem~\ref{ip2}. 
Introduce two problems $L^{(1)} = L(\textbf{q}^{(1)})$ and $L^{(2)} = L(\textbf{q}^{(2)})$. We agree that, if an object $\gamma$ is related to the problem $L$, then $\gamma^{(\nu)}$ is the similar object related to the problem $L^{(\nu)}$ ($\nu = 1, 2$).

Introduce the set $\textbf{P}_Q := \left\{ \textbf{q}: \sum\limits_{i=1}^{m}\|q_j(x)\|_{L_2(0, \pi)} \le Q\right\}$, $Q > 0$. Below, we denote by $C(Q)$ various positive constants depending only on $Q$.

Our important result is the following theorem on uniform stability of Inverse Problem~\ref{ip2}:

\begin{thm} \label{thm_uni_stab_sp}
Let $Q > 0$ and $\textbf{q}^{(1)}, \textbf{q}^{(2)} \in \textbf{P}_Q$. Then
\begin{equation} \label{uni}
\sum\limits_{j=1}^{m} \| q_j^{(1)}(x) - q_j^{(2)}(x) \|_{L_2(0, \pi)} \le C(Q)\tilde\delta,
\end{equation}
where $\delta = \Bigg( \sum\limits_{n=1}^{\infty} \bigg(\sum\limits_{k=1}^{m}|n(\rho^{(1)}_{nk} - \rho^{(2)}_{nk})|^2 + \sum\limits_{k=1}^{m}\sum\limits_{j=1}^{m-1}|n(\theta_{nkj}^{(1)}-\theta_{nkj}^{(2)})|^2 \bigg) \Bigg)^{\frac{1}{2}}$.
\end{thm}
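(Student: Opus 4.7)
The plan is to follow the method of spectral mappings: pass from the given eigenvalues to the characteristic functions $\Delta, \Delta_1, \dots, \Delta_{m-1}$, then from these to Weyl-type functions $M_j$ on the individual edges $e_j$, and finally reduce the reconstruction of each $q_j$ to the classical inverse Sturm--Liouville problem on $[0,\pi]$, where uniform stability in $L_2$ is already known.

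First, I would bound $\Delta^{(1)}-\Delta^{(2)}$ and $\Delta_j^{(1)}-\Delta_j^{(2)}$. Each of $\Delta(\lambda)$ and $\Delta_j(\lambda)$ is entire of order $1/2$, whose zeros in the $\rho$-variable are precisely $\{\rho_{nk}\}$ and $\{\theta_{nkj}\}$ (with doublings on the negative axis), with leading asymptotics fixed by \eqref{eigen_asymp}, \eqref{eigen_asymp_m} and the analogous relations for $\theta_{nkj}$. Under $\textbf{q}^{(1)},\textbf{q}^{(2)}\in\textbf{P}_Q$, the sequences $\{\kappa_{nk}\}$, $\{\xi_{nkj}\}$ admit a uniform $\ell_2$-bound $C(Q)$, so the zeros are uniformly separated. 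Applying the uniform-stability theorem of Buterin \cite{But} on recovery of entire functions of exponential type from their zeros, I obtain
\[
|\Delta^{(1)}(\lambda)-\Delta^{(2)}(\lambda)|+\sum_{j=1}^{m-1}|\Delta_j^{(1)}(\lambda)-\Delta_j^{(2)}(\lambda)|\le C(Q)\,\delta\,|\rho|^{m-1}\exp(m|\mathrm{Im}\,\rho|\pi),
\]
together with the corresponding summable $L_2$-estimates along horizontal lines.

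Next, for each $j=\overline{1,m}$ I would introduce the Weyl-type function $M_j(\lambda)$ for the edge $e_j$, expressed via $\Delta$ and $\{\Delta_k\}_{k=1}^{m-1}$ by the standard spectral-mappings formalism of \cite{YuGr} on the star graph; for the ``missing'' index $j=m$ one uses that $\Delta$ is a symmetric sum over edges (the Kirchhoff condition \eqref{kc}), which determines $M_m$ algebraically from $\Delta$ and the remaining $M_1,\dots,M_{m-1}$. Combining the Step~1 estimates with the lower bound $|\Delta^{(\nu)}(\lambda)|\ge c(Q)|\rho|^{m-1}\exp(m|\mathrm{Im}\,\rho|\pi)$ valid on the standard family of contours $\Gamma_N=\{|\rho|=N+\tfrac12\}$ that avoid the zeros (this bound is itself uniform on $\textbf{P}_Q$ by the Hadamard factorization of $\Delta$ and the separation of the $\rho_{nk}$), I would deduce $\|M_j^{(1)}-M_j^{(2)}\|_{L_2(\Gamma_N)}\le C(Q)\,\delta$ for each $j$. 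Since $M_j(\lambda)$ is precisely the Weyl--Titchmarsh function of the scalar Sturm--Liouville operator $-\partial_{x_j}^2+q_j$ on $[0,\pi]$ with $y_j(0)=0$ and an effective self-adjoint condition at $x_j=\pi$ synthesized from the rest of the graph, the uniform stability of the inverse problem on an interval (\cite{SavShk10, SavShk13, BondConst}) yields $\|q_j^{(1)}-q_j^{(2)}\|_{L_2(0,\pi)}\le C(Q)\,\delta$, and summation over $j=\overline{1,m}$ gives \eqref{uni}.

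The main obstacle will lie in Step~2: colliding zeros of $\Delta^{(1)}$ and $\Delta^{(2)}$ could make the pointwise behaviour of the ratios used to build $M_j^{(\nu)}$ uncontrollable, so the argument must be carried out on contours $\Gamma_N$ chosen simultaneously for both problems, with uniform lower bounds depending only on $Q$. The accompanying bookkeeping -- converting a uniform zero-perturbation bound into a uniform functional bound of Weyl-function type, and handling the algebraic reconstruction of $M_m$ from $\Delta$ and $M_1,\dots,M_{m-1}$ without losing the $\delta$-scaling -- is where the main technical work of the proof will concentrate, and where the Buterin-type estimate must be used to its full strength.
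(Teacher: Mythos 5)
Your Step~1 is essentially the paper's Lemma~\ref{lem:difF}: both apply Buterin's theorem \cite{But} to the factorized characteristic functions to get $L_2$-bounds on the differences $\Delta^{(1)}-\Delta^{(2)}$ and $\Delta_j^{(1)}-\Delta_j^{(2)}$ of order $C(Q)\delta$. But your Step~2 diverges from the paper and contains the genuine gap. First, the object you call $M_j$ is not the Weyl--Titchmarsh function of a scalar Sturm--Liouville operator with a fixed self-adjoint condition at $x_j=\pi$: the ``effective condition synthesized from the rest of the graph'' is $\lambda$-dependent, so the interval results of \cite{SavShk10, SavShk13} do not apply to it as stated. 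Second, even granting a bound $\|M_j^{(1)}-M_j^{(2)}\|_{L_2(\Gamma_N)}\le C(Q)\delta$ on contours, the known uniform-stability theorems for the interval take as input $\ell_2$-differences of \emph{spectral data} (eigenvalues and weight/norming constants), i.e.\ poles and residues of $M_j$; extracting those from a contour estimate with uniform control --- precisely in the regime of colliding or nearly colliding poles that you flag as ``the main obstacle'' --- is the hard part, and your sketch does not close it. Third, for the edge $e_m$ there is no separate given spectrum, and the assertion that $M_m$ is determined ``algebraically'' from $\Delta$ and $M_1,\dots,M_{m-1}$ without losing the $\delta$-scaling is exactly what needs proof: the algebraic elimination forces division by products of $S_k(\pi,\lambda)$, which vanish on the real axis.

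The paper avoids all three difficulties by a different route. For $k=\overline{1,m-1}$ it does not pass through Weyl functions at all: it invokes a ready-made estimate (Lemma~\ref{lem:bound}, i.e.\ Theorem~2.3 of \cite{Bond25cycle}) bounding $\|q_k^{(1)}-q_k^{(2)}\|_{L_2(0,\pi)}$ directly by the $L_2(\mathbb{R})$-norms of $\rho^m(\Delta^{(1)}-\Delta^{(2)})$ and $\rho^{m-1}(\Delta_k^{(1)}-\Delta_k^{(2)})$, which combined with Lemma~\ref{lem:difF} settles those edges. For $k=m$ it solves the linear system \eqref{Sm} for the Cauchy data $S_m(\pi,\lambda)$, $S_m'(\pi,\lambda)$, and --- crucially --- evaluates everything at the complex points $\mu_n=(n+i\alpha)^2$, where $\{e^{-i\nu_n t}\}$ is a Riesz basis and the denominators $B(\mu_n)=\prod_k S_k^2(\pi,\mu_n)$ are bounded below uniformly on $\mathbf{P}_Q$ because $|\sin\nu_n\pi|$ is bounded away from zero off the real axis. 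This converts the problem into $\ell_2$-estimates \eqref{difkn}--\eqref{difhn} for Fourier coefficients of the transmutation kernels $K_m(\pi,\cdot)$, $N_m(\pi,\cdot)$, and the final step is the uniform stability of the Borg problem from Cauchy data \cite{Bond25}, not from the Weyl function. If you want to salvage your outline, you would need either to prove a uniform residue-extraction lemma for the $M_j$ (handling multiple eigenvalues), or to move your evaluation points off the real axis as the paper does; as written, the passage from contour bounds on $M_j$ to $\|q_j^{(1)}-q_j^{(2)}\|_{L_2(0,\pi)}\le C(Q)\delta$ is not justified.
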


According to \cite{YuGr}, the Weyl solution $\Phi_{s}(x, \la)$ for each $s = \overline{1, m}$ is the vector $[\Phi_{js}(x, \la)]_{j=1}^{m}$ that satisfies equation \eqref{eqv1}, conditions \eqref{bc2}-\eqref{kc} and the boundary conditions:
$$
\Phi_{ss}(0, \la) = 1, \quad \Phi_{js}(0, \la) = 0, \quad j = \overline{1, m}\setminus s.
$$

The Weyl functions of the problem $L$ are defined as
$$
M_j(\la) = \Phi'_{jj}(0, \la).
$$

The Weyl functions are meromorphic in $\la$ and their poles coincide with the eigenvalues $\{ \la_{nk} \}$. Define the so-called weight numbers $\alpha_{nkj}$ as the following residues:
$$
\alpha_{nkj} = -\mathop{\mathrm{Res}}_{\la = \la_{nk}} M_j(\la).
$$

It follows from \cite[Theorem 3.1]{BondMMAS} that all $\alpha_{nkj} \ge 0$.
In the case of multiple eigenvalue $\lambda_{n_1 k_1} = \lambda_{n_2 k_2} = \dots = \lambda_{n_r k_r}$ we have $\alpha_{n_1 k_1 j} = \alpha_{n_2 k_2 j} = \dots = \alpha_{n_r k_r j}$. Let us split this weight number into the $r$ non-negative numbers: $\alpha_{n_1 k_1 j} = \sum_{i = 1}^r \beta_{n_i k_i j}$. 
Then, they satisfy the asymptotics (see \cite{Kuz18}):
\begin{gather*}
\sum_{k = 1}^{m-1} \beta_{nkj} =
\dfrac{2n^2}{m\pi}\Big( m-1+\dfrac{\varkappa_{nj}^0}{n} \Big), \\
\quad \beta_{nmj} = \dfrac{\Big( n - \dfrac{1}{2} \Big)^2}{m\pi}\Big( 2 + \dfrac{\varkappa_{nj}^1}{n} \Big),
\end{gather*}
where $n \ge 1$, $j = \overline{1,m}$ and $\{ \varkappa_{nj}^i \} \in l_2$, $i = 0, 1$.

Then, we can introduce the following inverse problem.

\begin{ip} \label{ip1}
    Given the spectral data $\{ \la_{nk}, \beta_{nkj} \}_{n \ge 1, k = \overline{1, m}, \ j = \overline{1,m}}$, find $q_j(x)$, $j = \overline{1, m}$.
\end{ip}

Our next important result is the following theorem on the uniform stability of Inverse Problem~\ref{ip1}:

\begin{thm} \label{thm_uni_stab}
Let $Q > 0$ and $\textbf{q}^{(1)}, \textbf{q}^{(2)} \in \textbf{P}_Q$. Then
\begin{equation*} 
\sum\limits_{j=1}^{m} \| q_j^{(1)}(x) - q_j^{(2)}(x) \|_{L_2(0, \pi)} \le C(Q)\tilde\delta,
\end{equation*}
where $\tilde\delta = \Bigg( \sum\limits_{n=1}^{\infty} \sum\limits_{k=1}^{m}(n\delta_{nk})^2 \Bigg)^{\frac{1}{2}}$, $\delta_{nk} = |\rho^{(1)}_{nk} - \rho^{(2)}_{nk}| + n^{-2}\sum\limits_{j=1}^{m}|\beta^{(1)}_{nkj} - \beta^{(2)}_{nkj}|$.

\end{thm}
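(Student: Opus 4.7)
The plan is to use the Weyl functions $M_j(\la)$ as the bridge between the spectral data $\{\la_{nk},\beta_{nkj}\}$ and the potentials, following the method of spectral mappings on the star graph as developed in \cite{YuGr, BondConst, BondMatr}. Since the poles of $M_j(\la)$ are the eigenvalues $\la_{nk}$ and its (possibly split) residues are $-\beta_{nkj}$, we have a Mittag--Leffler-type representation
\[
M_j(\la)=M_j^0(\la)-\sum_{n,k}\frac{\beta_{nkj}}{\la-\la_{nk}},
\]
where $M_j^0(\la)$ is an explicit part dictated by the leading asymptotics of $\la_{nk}$ and $\beta_{nkj}$ and is therefore common to both problems $L^{(1)}, L^{(2)}\in\textbf{P}_Q$.

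The first step is to estimate $|M_j^{(1)}(\la)-M_j^{(2)}(\la)|$ on contours $\Gamma_N=\{|\rho|=N+\tfrac12\}$, $\rho=\sqrt\la$, which stay uniformly away from the eigenvalues of both problems by a distance bounded below in terms of $Q$ alone. Splitting each term as
\[
\frac{\beta_{nkj}^{(1)}}{\la-\la_{nk}^{(1)}}-\frac{\beta_{nkj}^{(2)}}{\la-\la_{nk}^{(2)}}=\frac{\beta_{nkj}^{(1)}-\beta_{nkj}^{(2)}}{\la-\la_{nk}^{(1)}}+\beta_{nkj}^{(2)}\frac{\la_{nk}^{(1)}-\la_{nk}^{(2)}}{(\la-\la_{nk}^{(1)})(\la-\la_{nk}^{(2)})},
\]
using the asymptotic bound $|\beta_{nkj}^{(\nu)}|\le C(Q)\,n^2$ and applying Cauchy--Schwarz in $n$ against the $l_2$-sequences encoded in the definition of $\tilde\delta$, I would obtain a uniform estimate $\sup_{N\ge 1}\sup_{\la\in\Gamma_N}|M_j^{(1)}(\la)-M_j^{(2)}(\la)|\le C(Q)\,\tilde\delta$.

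The second step is to convert this Weyl function bound into a potential bound through the main equation of the method of spectral mappings. Following \cite{BondConst, BondMatr, Bond25cycle}, I would write, for each edge $j$ and each $x\in[0,\pi]$, a linear operator equation $(I-\tilde R(x))\,\tilde\psi(x)=\tilde\varphi(x)$ in an appropriate Banach space of sequences, in which $\tilde R(x)$ and $\tilde\varphi(x)$ are built from contour integrals of $M_j^{(1)}-M_j^{(2)}$ against the known solutions $S_j^{(\nu)}, C_j^{(\nu)}$ of the unperturbed equations, and $\tilde\psi(x)$ encodes the differences $q_j^{(1)}-q_j^{(2)}$. Uniform Hilbert--Schmidt bounds on the kernels of $\tilde R(x)$ in terms of $Q$ (standard from the theory of transformation operators) make $I-\tilde R(x)$ uniformly invertible, yielding
\[
\sum_{j=1}^{m}\|q_j^{(1)}-q_j^{(2)}\|_{L_2(0,\pi)}\le C(Q)\sum_{j=1}^{m}\sup_{N,\la\in\Gamma_N}|M_j^{(1)}(\la)-M_j^{(2)}(\la)|,
\]
which combined with the first step proves the theorem.

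The principal obstacle will be the treatment of multiple and clustering eigenvalues. Because the splitting $\alpha_{nkj}=\sum_i\beta_{n_ik_ij}$ is non-unique, the individual fractions $\beta_{nkj}/(\la-\la_{nk})$ are not intrinsically defined, and the analysis must be carried out at the level of true cluster residues, or equivalently of line integrals encircling the whole cluster. Here I anticipate invoking Buterin's uniform stability result \cite{But} on recovering analytic functions from their zeros in order to absorb the eigenvalue clustering into the $\tilde\delta$-bound without blow-up. A related subtle point is to verify that the Hilbert--Schmidt norms in the second step really depend only on $\sum_j\|q_j\|_{L_2}$, which requires a careful graph analog of the Savchuk--Shkalikov machinery \cite{SavShk10, SavShk13} adapted to the star-graph setting as in \cite{BondConst}.
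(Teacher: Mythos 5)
There is a genuine gap in your second step. The inequality
\[
\sum_{j=1}^{m}\|q_j^{(1)}-q_j^{(2)}\|_{L_2(0,\pi)}\le C(Q)\sum_{j=1}^{m}\sup_{N,\;\la\in\Gamma_N}\bigl|M_j^{(1)}(\la)-M_j^{(2)}(\la)\bigr|
\]
cannot hold with a constant depending only on $Q$. The quantity on the right is essentially an $l_\infty$-type norm of the weighted data differences $\{n\delta_{nk}\}$: on the contour $\Gamma_N$ the term $(\beta^{(1)}_{nkj}-\beta^{(2)}_{nkj})/(\la-\la^{(1)}_{nk})$ contributes roughly $n\delta_{nk}$ for $n$ near $N$ and decays like $(|N-n|+1)^{-1}$ away from it, so the sup over all contours is of order $\sup_n n\delta_{nk}$ up to logarithms. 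But controlling $q_j^{(1)}-q_j^{(2)}$ in $L_2$ genuinely requires the $l_2$-norm $\tilde\delta$: a perturbation of $q_j^{(1)}$ spread over many high frequencies (say $\epsilon N_0^{-1/2}\sum_{N_0\le n\le 2N_0}\cos 2nx$) keeps $\|q_j^{(1)}-q_j^{(2)}\|_{L_2}\asymp\epsilon$ and $\tilde\delta\asymp\epsilon$, while driving $\sup_{\Gamma_N}|M_j^{(1)}-M_j^{(2)}|$ down to $O(\epsilon N_0^{-1/2}\log N_0)$. The structural reason is that in the method of spectral mappings the potential difference arises only after an $x$-differentiation of a series, and the termwise-differentiated series converges in $L_2$ precisely because its coefficients lie in $l_2$ with weight $n$ --- information that is destroyed once the data are compressed into a sup over contours. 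Your first step is fine as far as it goes, but it carries forward the wrong quantity; you would need weighted $l_2$ estimates of the residue data of $M_j^{(1)}-M_j^{(2)}$, not a pointwise bound.

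The paper avoids this entirely: it starts from the explicit series representation \eqref{q_rec} (formula (10.24) of \cite{BondMatr}) for $q^{(1)}_j-q^{(2)}_j$ in terms of the spectral data of both problems, splits it into the six series \eqref{ser1}--\eqref{ser6} according to which datum ($\beta_{nkj}$ or $\la_{nk}$) is varied, and estimates each series in $L_2(0,\pi)$ directly via the transformation-operator representations of Proposition~\ref{prop:KN} and Cauchy--Schwarz against $\{n\delta_{nk}\}\in l_2$. This is exactly the repaired version of your plan, with the main equation already solved and packaged into \eqref{q_rec}. Also note that your anticipated difficulties are misplaced for this theorem: the non-uniqueness of the splitting of weight numbers at multiple eigenvalues is handled simply by fixing the splitting that minimizes $\tilde\delta$, and Buterin's theorem \cite{But} is used only in the proof of Theorem~\ref{thm_uni_stab_sp}, not here.
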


Note that splitting of the weight numbers related to multiple eigenvalues should be chosen in the way to minimize the value $\delta$.

\section{Uniform stability of the inverse problem by the set of spectra} \label{sec:stab}

In this section, we prove Theorem~\ref{thm_uni_stab_sp} on the uniform stability of the inverse problem by the set of spectra. At first, we get the estimates for the kernel functions of characteristic function and some additional functions. Then, using the Fourier coefficients we get the required estimate for the potentials.

\begin{prop}[\cite{FY01}] \label{prop:KN}
    Let $\mathbf{q} \in \mathbf{P}_Q$. Then, the following relations hold for $j = \overline{1, m}$:
    \begin{gather}
        \label{s_asymp}
        S_j(x, \la) = \dfrac{\sin\rho x}{\rho} + \dfrac{1}{\rho^2} \int\limits_0^x {K_j(x, t)}\cos\rho t \, dt, \\
        \notag
        S'_j(x, \la) = \cos\rho x + \dfrac{1}{\rho} \int\limits_0^x {N_j(x, t)}\sin\rho t \, dt,
    \end{gather}
    where $K_j(x, \cdot)$, $N_j(x, \cdot)$ $\in L_2(0, x)$. Moreover, $\| K_j(x, \cdot) \|_{L_2(0, x)} \le C(Q)$, $\| N_j(x, \cdot) \|_{L_2(0, x)} \le C(Q)$ for $j = \overline{1,m}$, $x \in (0, T_j]$.
\end{prop}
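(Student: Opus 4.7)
The assertion is a standard result from transmutation-operator theory (cf.~\cite{FY01, Mar11}), so my plan is to recap the construction and verify that all constants depend only on $Q$.

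I would start by converting equation \eqref{eqv1} with the initial data $S_j(0,\la)=0$, $S_j'(0,\la)=1$ to the equivalent Volterra equation
\[
S_j(x,\la) = \frac{\sin\rho x}{\rho} + \int_0^x \frac{\sin\rho(x-t)}{\rho}\,q_j(t)\,S_j(t,\la)\,dt,
\]
and iterate to obtain the Neumann series $S_j = \sum_{k\ge 0}S_j^{(k)}$ with $S_j^{(0)} = \sin\rho x/\rho$ and the standard termwise bound $|S_j^{(k)}(x,\la)| \le |\rho|^{-(k+1)}(C\|q_j\|_{L_2})^k e^{|\Im\rho|x}/k!$. Applying the product-to-sum identity $2\sin\alpha\sin\beta = \cos(\alpha-\beta) - \cos(\alpha+\beta)$ and the change of variables $s = x - 2t$ inside each iterate, I would rearrange the series into the form $\sin\rho x/\rho + \rho^{-2}\int_0^x K_j(x,t)\cos\rho t\,dt$, with $K_j(x,t)$ given by an explicit series of iterated integrals of $q_j$. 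The termwise estimates then yield $\|K_j(x,\cdot)\|_{L_2(0,x)} \le C(Q)$ uniformly for $\mathbf{q}\in\mathbf{P}_Q$, since $\|q_j\|_{L_2}\le Q$.

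For the derivative representation, I would differentiate the obtained formula for $S_j(x,\la)$ in $x$, combine the boundary contribution $K_j(x,x)\cos\rho x/\rho^2$ with the leading cosine term, and integrate by parts once in $t$ using $\cos\rho t = (\sin\rho t)'/\rho$. This produces the sine-kernel representation with $N_j(x,t)\in L_2(0,x)$, whose norm is controlled by $\|K_j(x,\cdot)\|_{L_2}$ and $\|\partial_x K_j(x,\cdot)\|_{L_2}$, hence again by $C(Q)$.

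The main obstacle I expect is the bookkeeping in the first step: one has to verify that the rearrangement of iterated sine-products into a single cosine transform produces a genuine $L_2$-kernel rather than leaving distributional boundary remainders at $t = x$, and that the resulting series converges with constants depending on $\mathbf{q}$ solely through $\|q_j\|_{L_2}$. This verification is carried out in detail in \cite{FY01}, and inspection of the estimates there shows that the dependence on the potential is indeed through $\|q_j\|_{L_2}$ only, which yields the claimed uniformity on $\mathbf{P}_Q$.
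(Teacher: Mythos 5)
The paper gives no proof of this proposition at all: it is quoted verbatim (up to notation) from the cited monograph \cite{FY01}, so there is nothing internal to compare your argument against. Your reconstruction — Volterra integral equation, Neumann series, product-to-sum rearrangement into a cosine transform, then differentiation and integration by parts for $S_j'$ — is exactly the standard transmutation-operator derivation that the citation points to, and the uniformity of the constants in $\|q_j\|_{L_2}\le Q$ does follow from the termwise bounds as you say.

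One caveat on the point you yourself flag as ``the main obstacle'': the boundary remainder at $t=x$ does \emph{not} vanish in general. Carrying out the integration by parts on the transformation-operator form $S_j(x,\la)=\frac{\sin\rho x}{\rho}+\int_0^x G_j(x,t)\frac{\sin\rho t}{\rho}\,dt$ with $G_j(x,x)=\frac12\int_0^x q_j(s)\,ds$ leaves the term $-\frac{\cos\rho x}{2\rho^2}\int_0^x q_j(s)\,ds$ (and, for $S_j'$, the term $\frac{\sin\rho x}{2\rho}\int_0^x q_j(s)\,ds$). By Paley--Wiener, $\cos\rho x$ is not the cosine transform of any $L_2(0,x)$ density, so these terms cannot be absorbed into $K_j$ or $N_j$; the representations as literally stated require $\int_0^x q_j(t)\,dt=0$. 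This holds at $x=\pi$ because of the paper's standing normalization $\int_0^\pi q_j(x_j)\,dx_j=0$, and $x=\pi$ is the only point at which the proposition is actually used, but your claim that the rearrangement leaves no boundary remainder for all $x\in(0,T_j]$ is not correct as stated. Either restrict the conclusion to $x=\pi$ or record the extra first-order terms explicitly.
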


Recall, that $\Delta(\la)$ is the characteristic function of the problem \eqref{eqv1}-\eqref{kc}, and $\Delta_j(\la)$ is the characteristic function of the problem with the same form, but with changed boundary condition on the $j$-th vertex: $y'_j(0) = 0$.

For $T > 0$, denote by $PW(T)$ the class of the Paley-Wiener functions of the form
$$
\mathcal F(\rho) = \int_{-T}^T f(t) e^{i \rho t} \, dt, \quad f \in L_2(-T, T).
$$

\begin{lem}[Corollary 3.3 from \cite{Bond25cycle}]
    The following representations hold
    \begin{gather} \notag
        \Delta(\la) = \dfrac{m\sin^{m-1}\rho\pi\cos\rho\pi}{\rho^{m-1}} + \dfrac{F(\rho)}{\rho^m}, \\ \notag
        \Delta_k(\la) = \dfrac{\sin^{m-2}\rho\pi(m\cos^2\rho\pi - 1)}{\rho^{m-2}} + \dfrac{F_k(\rho)}{\rho^{m-1}},
    \end{gather}
    where $F$, $F_k \in PW(m\pi)$, $k = \overline{1, m-1}$.
\end{lem}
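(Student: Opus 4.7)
The strategy is straightforward: substitute the kernel representations of Proposition~\ref{prop:KN} (together with their analogues for $C_j, C_j'$) into the defining product formulas, expand, and isolate the leading terms in $\rho$. The remaining subleading terms, once multiplied by $\rho^m$ (resp.\ $\rho^{m-1}$ for $\Delta_k$), should assemble into a $PW(m\pi)$ function.

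First I would write
\begin{gather*}
S_j(\pi,\lambda) = \tfrac{\sin\rho\pi}{\rho} + \tfrac{1}{\rho^2}\hat{K}_j(\rho), \qquad
S_j'(\pi,\lambda) = \cos\rho\pi + \tfrac{1}{\rho}\hat{N}_j(\rho),\\
C_j(\pi,\lambda) = \cos\rho\pi + \tfrac{1}{\rho}\hat{P}_j(\rho), \qquad
C_j'(\pi,\lambda) = -\rho\sin\rho\pi + \hat{Q}_j(\rho),
\end{gather*}
with $\hat{K}_j,\hat{N}_j,\hat{P}_j,\hat{Q}_j\in PW(\pi)$. The first line is Proposition~\ref{prop:KN}; the second is obtained by the same argument applied to the transformation-operator representation of $C_j$, combined with the normalization $\int_0^\pi q_j(t)\,dt=0$ (which kills the boundary term $\tfrac12\int_0^\pi q_j$ in $C_j'(\pi,\lambda)$). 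Substituting into $\Delta(\lambda)=\sum_{k=1}^m S_k'(\pi,\lambda)\prod_{j\neq k}S_j(\pi,\lambda)$ and keeping only the leading part of each factor produces the announced principal term $m\sin^{m-1}\rho\pi\cos\rho\pi/\rho^{m-1}$. Every other summand in the binomial expansion involves at least one kernel, and after multiplication by $\rho^m$ takes the form $\rho^{1-s}$ times a product of a trigonometric polynomial of type $\le(m-s)\pi$ with $s$ kernels from $PW(\pi)$. Using the closure property $PW(T_1)\cdot PW(T_2)\subset PW(T_1+T_2)$, together with the fact that each $\sin\rho\pi,\cos\rho\pi$ shifts the Paley--Wiener type by $\pi$, one sees that the overall Fourier support lies in $[-m\pi,m\pi]$.

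The main technical obstacle is the prefactor $\rho^{-(s-1)}$ that appears when $s\ge 2$. Individually such summands are not entire, but their \emph{total sum} is, since it equals the entire function $F(\rho):=\rho^m\Delta(\lambda)-\rho\cdot m\sin^{m-1}\rho\pi\cos\rho\pi$. The apparent poles at $\rho=0$ therefore cancel, as can be checked by Taylor-expanding the trigonometric factors. Combined with the uniform $L_2$-bounds on the kernels furnished by Proposition~\ref{prop:KN}, this gives that $F$ is an entire function of exponential type $\le m\pi$ lying in $L_2(\mathbb R)$; the Paley--Wiener theorem then yields $F\in PW(m\pi)$.

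The argument for $\Delta_k(\lambda)$ is completely parallel. Applying the product rule to $\bigl(C_k(x,\lambda)\prod_{n\neq k}S_n(x,\lambda)\bigr)'\bigr|_{x=\pi}$ and substituting the leading orders, the summand $C_k'(\pi,\lambda)\prod_{n\neq k}S_n(\pi,\lambda)$ contributes $-\sin^m\rho\pi/\rho^{m-2}$ while $C_k(\pi,\lambda)\sum_{i\neq k}S_i'(\pi,\lambda)\prod_{n\neq k,i}S_n(\pi,\lambda)$ contributes $(m-1)\cos^2\rho\pi\sin^{m-2}\rho\pi/\rho^{m-2}$. Using $-\sin^2\rho\pi+(m-1)\cos^2\rho\pi=m\cos^2\rho\pi-1$ one recovers the stated principal part, and the remainder $F_k$ is treated exactly as above to produce $F_k\in PW(m\pi)$. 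The only bookkeeping burden is keeping track of the many subleading summands and verifying the $L_2$ estimate uniformly in $\mathbf{q}\in\mathbf{P}_Q$, which reduces to elementary estimation given the uniform kernel bounds.
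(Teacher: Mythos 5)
The paper itself supplies no proof of this lemma: it is imported verbatim as Corollary~3.3 of \cite{Bond25cycle}, so there is nothing internal to compare your argument against. Judged on its own, your derivation is correct and is the natural direct route: substitute the transformation-operator representations of $S_j(\pi,\la)$, $S_j'(\pi,\la)$ (Proposition~\ref{prop:KN}) and of $C_j(\pi,\la)$, $C_j'(\pi,\la)$ into the product formulas for $\Delta$ and $\Delta_k$, read off the leading trigonometric terms (your computation $-\sin^2\rho\pi+(m-1)\cos^2\rho\pi=m\cos^2\rho\pi-1$ correctly reproduces the stated principal part of $\Delta_k$), and then identify the remainder as $F(\rho)=\rho^m\Delta(\la)-m\rho\sin^{m-1}\rho\pi\cos\rho\pi$, which is manifestly entire, of exponential type at most $m\pi$, and in $L_2(\mathbb R)$ because every non-leading summand carries at least one $PW(\pi)$ kernel factor multiplied by bounded trigonometric factors; Paley--Wiener then gives $F\in PW(m\pi)$. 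Your handling of the two delicate points is right: the apparent poles $\rho^{1-s}$ of individual summands cancel because the total is entire, and the representations $C_j(\pi,\la)=\cos\rho\pi+\hat P_j(\rho)/\rho$, $C_j'(\pi,\la)=-\rho\sin\rho\pi+\hat Q_j(\rho)$ with $\hat P_j,\hat Q_j\in PW(\pi)$ do require the normalization $\int_0^\pi q_j\,dt=0$ (which kills the terms proportional to $\sin\rho\pi$ and $\cos\rho\pi$ that would otherwise fail to lie in $PW(\pi)$), exactly as you note. The only remaining work is the routine bookkeeping you acknowledge, so the proposal is a sound self-contained proof of the cited result.
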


Introduce two problems $L^{(1)}$ and $L^{(2)}$ such that $\textbf{q}^{(j)} \in \textbf{P}_Q$, $j=1, 2$.

\begin{lem} \label{lem:difF}
    The following estimates hold:
    $$
    \| F^{(1)}(\rho) - F^{(2)}(\rho) \|_{L_2(\mathbb{R})} \le C(Q)\delta, \quad \| F_k^{(1)}(\rho) - F_k^{(2)}(\rho) \|_{L_2(\mathbb{R})} \le C(Q)\delta, \quad k = \overline{1,m-1}.
    $$
\end{lem}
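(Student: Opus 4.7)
The plan is to reduce both estimates to the uniform stability of recovering Paley--Wiener functions from their zeros, as established in~\cite{But}. Since the main (potential-independent) term in the representation of $\Delta(\la)$ cancels in the difference, we have
\begin{equation*}
F^{(1)}(\rho) - F^{(2)}(\rho) = \rho^m \bigl( \Delta^{(1)}(\la) - \Delta^{(2)}(\la) \bigr),
\end{equation*}
and an analogous identity holds for $F_k$; thus it suffices to bound these right-hand sides in $L_2(\mathbb{R})$.

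For each $\nu = 1, 2$, the function $\rho^m \Delta^{(\nu)}(\la)$ is entire in $\rho$ of exponential type $m\pi$ and decomposes as $G_0(\rho) + F^{(\nu)}(\rho)$, where $G_0(\rho) = m\rho \sin^{m-1}(\rho\pi)\cos(\rho\pi)$ is potential-independent and $F^{(\nu)} \in PW(m\pi)$. Its zero set $\{0\} \cup \{\pm\rho_{nk}^{(\nu)}\}_{n \ge 1,\, k = \overline{1, m}}$ matches, asymptotically, that of $G_0$, with perturbations controlled by \eqref{eigen_asymp}--\eqref{eigen_asymp_m}. Applying the stability theorem of~\cite{But} to the pair $\rho^m \Delta^{(1)}$, $\rho^m \Delta^{(2)}$ then gives
\begin{equation*}
\|F^{(1)} - F^{(2)}\|_{L_2(\mathbb{R})} \le C(Q) \Bigg( \sum_{n=1}^{\infty} \sum_{k=1}^{m} \bigl| n(\rho_{nk}^{(1)} - \rho_{nk}^{(2)}) \bigr|^2 \Bigg)^{1/2} \le C(Q)\delta.
\end{equation*}

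The argument for $F_k^{(1)} - F_k^{(2)}$ is identical. The function $\rho^{m-1} \Delta_k^{(\nu)}(\la)$ is entire of exponential type $m\pi$, its main term $\rho \sin^{m-2}(\rho\pi)(m\cos^2(\rho\pi) - 1)$ is potential-independent, its PW correction $F_k^{(\nu)}$ lies in $PW(m\pi)$, and its zeros in $\rho$ are $\{0\} \cup \{\pm \theta_{n\ell k}^{(\nu)}\}$ with the required asymptotics. Invoking~\cite{But} once more yields
\begin{equation*}
\|F_k^{(1)} - F_k^{(2)}\|_{L_2(\mathbb{R})} \le C(Q) \Bigg( \sum_{n=1}^{\infty} \sum_{\ell=1}^{m} \bigl| n(\theta_{n\ell k}^{(1)} - \theta_{n\ell k}^{(2)}) \bigr|^2 \Bigg)^{1/2} \le C(Q)\delta.
\end{equation*}

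The main technical obstacle is the consistent pairing of zeros of $\Delta^{(1)}$ with those of $\Delta^{(2)}$, and of $\Delta_k^{(1)}$ with $\Delta_k^{(2)}$, in the presence of possibly multiple eigenvalues: one must match them according to the canonical asymptotic ordering so that the perturbations entering the estimate of~\cite{But} are exactly the differences appearing in $\delta$. Uniformity of the constant $C(Q)$ over $\textbf{P}_Q$ follows from the uniform kernel bounds in Proposition~\ref{prop:KN} together with the uniformity built into the stability theorem of~\cite{But}.
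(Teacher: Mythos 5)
Your reduction to the identity $F^{(1)}(\rho)-F^{(2)}(\rho)=\rho^m\bigl(\Delta^{(1)}(\la)-\Delta^{(2)}(\la)\bigr)$ is correct and the final estimates you state are the right ones, but the central step --- applying the stability theorem of \cite{But} directly to the pair $\rho^m\Delta^{(1)}$, $\rho^m\Delta^{(2)}$ --- does not go through. Theorem~7 of \cite{But} concerns sine-type functions with \emph{asymptotically separated} zeros. For $m\ge 3$ the unperturbed function $m\rho\sin^{m-1}(\rho\pi)\cos(\rho\pi)$ has zeros of multiplicity $m-1$ at the integers, and correspondingly the zeros $\rho_{n1},\dots,\rho_{n,m-1}$ of $\Delta$ all cluster at $n$ by \eqref{eigen_asymp}; they are not asymptotically separated, so the hypothesis of the theorem fails for the full characteristic function. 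The same obstruction occurs for $\Delta_k$, whose zeros $\theta_{n1k},\dots,\theta_{n,m-2,k}$ all accumulate at the integers. This is not the ``pairing of possibly multiple eigenvalues'' issue you flag at the end; it is a structural asymptotic multiplicity that persists for generic potentials.

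The paper's proof is organized precisely to avoid this: the spectrum is split into the $m$ subsequences $\{\la_{nk}\}_{n\ge1}$, each of which generates its own sine-type factor $\theta_k(\la)$ (equal to $\frac{\sin\rho\pi}{\rho}+\frac{f_k(\rho)}{\rho^2}$ for $k\le m-1$ and to $\cos\rho\pi+\frac{f_m(\rho)}{\rho}$ for $k=m$) with simple, separated zeros, so that Theorem~7 of \cite{But} applies to each $\rho\theta_k$ individually and yields $\|f_k^{(1)}-f_k^{(2)}\|_{L_2(\mathbb R)}\le C_r\|\{n(\rho_{nk}^{(1)}-\rho_{nk}^{(2)})\}\|_{l_2}$. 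The estimate for $F^{(1)}-F^{(2)}$ is then transferred from the factors via the factorization $\Delta=\prod_{k=1}^m\theta_k$ and the telescoping identity for a difference of products; the argument for $F_k$ is analogous. To repair your argument you would need either to insert this factorization step or to invoke a version of the stability theorem valid for zeros forming asymptotically multiple groups, which \cite{But}, as used here, does not provide.
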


\begin{proof}
    Split the full spectrum of $\Delta(\la)$ into $m$ subsequences $\{ \la_{nk} \}_{n\ge 1}$, $k = \overline{1, m}$.
    For every subsequence, we can build a function $\theta_k(\la)$ of the following form (see \cite[Remark 1.1.2]{FY01}):
    $$
        \theta_k(\la) = \begin{cases} 
            \dfrac{\sin\rho\pi}{\rho} + \dfrac{f_k(\rho)}{\rho^2} = \pi\displaystyle\prod\limits_{n=1}^{\infty} \dfrac{\la_{nk} - \la}{n^2}, & k = \overline{1, m-1}, \\
            \cos\rho\pi + \dfrac{f_m(\rho)}{\rho} = \displaystyle\prod\limits_{n=1}^{\infty} \dfrac{\la_{nm} - \la}{(n-\frac{1}{2})^2}, & k = m,
        \end{cases} 
    $$
    where $f_k \in PW(\pi)$, $k = \overline{1,m}$.
    
    Next, for each function $\rho\theta_k(\la)$ we can apply the Theorem~7 from \cite{But} and get
    $$
    \| f^{(1)}_k - f^{(2)}_k \|_{L_2(\mathbb R)} \le C_r\| \{ n(\rho^{(1)}_{nk} - \rho^{(2)}_{nk}) \}_{n \ge 1} \|_{l_2},
    $$
    where the remainder terms of the asymptotics \eqref{eigen_asymp} and \eqref{eigen_asymp_m} satisfy $\|\{ \kappa_{nk}^{(i)} \}\|_{l_2} \le r$, $i = 1, 2$, and the constant $C_r$ depends only on $r$.
    
    Next, let us pass to the function $\Delta(\la)$ as follows:
    $$
    \Delta(\la) = \pi^{m-1}\prod\limits_{n=1}^{\infty}\dfrac{\la_{nm} - \la}{(n-\frac{1}{2})^2}\prod\limits_{k=1}^{m-1} \dfrac{\la_{nk} - \la}{n^2} = \prod\limits_{k=1}^{m}\theta_k(\la).
    $$

    Finally, pass to the difference:
    $$
    \Delta^{(1)}(\la) - \Delta^{(2)}(\la) = \sum\limits_{j=1}^{m}\prod\limits_{k=1}^{j-1}\theta^{(1)}_k(\la)(\theta^{(1)}_j(\la) - \theta^{(2)}_j(\la))\prod\limits_{k=j+1}^{m}\theta^{(2)}_j(\la),
    $$
    then
        $$
    \| F^{(1)}(\rho) - F^{(2)}(\rho) \|_{L_2(\mathbb{R})} \le C_r\delta.
    $$
    As all $\{ \kappa_{nk} \}_{n\ge 1}$, $k = \overline{1, m}$, depend on potentials $\textbf{q}$, which are bounded by constant $Q$, we can say, that $C_r = C(Q)$. This yields the claim for $F^{(1)}(\rho) - F^{(2)}(\rho)$. The estimate for $F_k^{(1)}(\rho) - F_k^{(2)}(\rho)$ is proved analogously.
\end{proof}

At first, we need to prove the required estimate for potentials of the vertexes with known spectra. In \cite{Bond25cycle} analogous theorem is proved for a graph with a cycle. But, assuming the length of the cycled edge equal to zero, we pass to the star-shaped graph without cycle with the number of edges $1$ less than the original one.

\begin{lem} [Theorem 2.3 from \cite{Bond25cycle}] \label{lem:bound}
    Let $Q > 0$ be fixed. Then, for any $\textbf{q}^{(1)}, \textbf{q}^{(2)} \in \textbf{P}_Q$, there holds
    $$
    \| q^{(1)}_k - q^{(2)}_k \|_{L_2(0, \pi)} \le C(Q) \bigg( \| \rho^{m}(\Delta^{(1)}-\Delta^{(2)})(\rho^2) \|_{L_2(\mathbb{R})} + \| \rho^{m-1}(\Delta^{(1)}_k - \Delta^{(2)}_k)(\rho^2) \|_{L_2(\mathbb{R})} \bigg),
    $$
    for $k = \overline{1, m-1}$.
\end{lem}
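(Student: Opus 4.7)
The plan is to follow the standard scheme for uniform stability of inverse problems on graphs: relate the differences of the characteristic functions $\Delta$, $\Delta_k$ to differences of the kernel functions $K_j(\pi, \cdot), N_j(\pi, \cdot)$ of Proposition~\ref{prop:KN}, and then convert the latter to potential differences via a classical Gelfand--Levitan type argument. Substituting \eqref{s_asymp} and the analogous formula for $S_j'(\pi, \la)$ into the definitions of $\Delta(\la)$ and $\Delta_k(\la)$ yields explicit representations in which the Paley--Wiener parts $F(\rho)$ and $F_k(\rho)$ decompose as a linear functional of the tuple $(K_\ell(\pi, \cdot), N_\ell(\pi, \cdot))_{\ell=1}^m$ plus bilinear and higher-order corrections in these kernels.

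Passing to the difference between $L^{(1)}$ and $L^{(2)}$ and using the uniform bounds $\|K_\ell^{(\nu)}(\pi, \cdot)\|_{L_2}, \|N_\ell^{(\nu)}(\pi, \cdot)\|_{L_2} \le C(Q)$ from Proposition~\ref{prop:KN}, the multilinear corrections acquire a $C(Q)$ prefactor, so $F^{(1)} - F^{(2)}$ and $F_k^{(1)} - F_k^{(2)}$ are, up to $C(Q)$-bounded operators, linear functionals of the kernel differences $K_\ell^{(1)} - K_\ell^{(2)}$ and $N_\ell^{(1)} - N_\ell^{(2)}$, supported in the Paley--Wiener interval $(-m\pi, m\pi)$. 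Expanding these differences in Fourier series on $(-m\pi, m\pi)$ and matching coefficients with the corresponding expansions of the kernel differences produces a linear system that, thanks to the asymmetric structure of $\Delta_k$ (which replaces $S_k$ by $C_k$ and so introduces an extra $\cos\rho\pi$-factor in the $k$-th slot), can be solved for the $k$-th edge kernel differences. This yields
$$
\|K_k^{(1)}(\pi, \cdot) - K_k^{(2)}(\pi, \cdot)\|_{L_2(0,\pi)} + \|N_k^{(1)}(\pi, \cdot) - N_k^{(2)}(\pi, \cdot)\|_{L_2(0,\pi)} \le C(Q)\bigl(\|\rho^m(\Delta^{(1)} - \Delta^{(2)})(\rho^2)\|_{L_2(\mathbb{R})} + \|\rho^{m-1}(\Delta_k^{(1)} - \Delta_k^{(2)})(\rho^2)\|_{L_2(\mathbb{R})}\bigr).
$$
The final step invokes the classical result that the solution operator of the Gelfand--Levitan integral equation on $[0,\pi]$ is uniformly bounded on $\mathbf{P}_Q$, so the kernel-difference bound transfers to the desired $L_2$ bound on $q_k^{(1)} - q_k^{(2)}$.

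The main obstacle is the intermediate linear-algebraic step: one has to verify that the system expressing the Fourier coefficients of $F^{(1)} - F^{(2)}$ and $F_k^{(1)} - F_k^{(2)}$ in terms of the kernel differences on the $k$-th edge is uniformly invertible on $\mathbf{P}_Q$. The combination $\{\Delta, \Delta_k\}$ is exactly designed for this: $\Delta$ mixes all edges symmetrically through the main term $m\sin^{m-1}\rho\pi\cos\rho\pi$, whereas $\Delta_k$ isolates the $k$-th edge through the $\cos\rho\pi$-factor from $C_k$, supplying the extra equation needed to decouple the $k$-th unknowns from the others. As the authors remark in the paragraph preceding the lemma, an alternative route is to derive the statement as a specialization of Theorem~2.3 of \cite{Bond25cycle} by letting the length of the cycle edge of the lasso graph tend to zero, in which limit the lasso degenerates to the star graph $G$ considered here.
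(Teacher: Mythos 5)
First, a point of comparison: the paper does not prove this lemma at all. It is imported verbatim as Theorem~2.3 of \cite{Bond25cycle}, with only the informal remark that letting the cycle edge of the lasso graph degenerate to a point turns that graph into the star graph considered here. So your sketch has to be judged on its own merits, and it has a genuine gap at its central step. What you actually derive is that $F^{(1)}-F^{(2)}$ and $F_k^{(1)}-F_k^{(2)}$ are \emph{controlled by} the kernel differences $K_\ell^{(1)}-K_\ell^{(2)}$, $N_\ell^{(1)}-N_\ell^{(2)}$: that is the forward direction, which needs only the uniform upper bounds of Proposition~\ref{prop:KN} and is essentially the content of Lemma~\ref{lem:difD}. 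The lemma requires the reverse direction: two scalar functions $\Delta-\Delta^{(2)}$ and $\Delta_k-\Delta_k^{(2)}$ must control the kernels of the single edge $k$, even though they depend on all $2m$ kernels, and depend on them rationally (through the products $\prod_{j}S_j(\pi,\lambda)$), not linearly. ``Expanding in Fourier series and matching coefficients'' therefore does not yield a linear system that can be ``solved for the $k$-th edge kernel differences''; isolating edge $k$ requires dividing by $\prod_{j\ne k}S_j(\pi,\lambda)$, hence \emph{lower} bounds of the type $|S_j(\pi,\mu_n)|\ge C(Q)|\nu_n|^{-1}$ on a Riesz-basis set of sample points $\mu_n=\nu_n^2$ off the real axis. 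This is exactly the mechanism the paper deploys in Section~\ref{sec:stab} for the remaining edge $m$ (formula \eqref{Sm} and items 1--4 in the proof of \eqref{difkn}), and it is the part you explicitly flag as ``the main obstacle'' but do not close. Uniform upper bounds on $\mathbf{P}_Q$ cannot substitute for this invertibility.

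A secondary inaccuracy: even granting control of $K_k^{(1)}(\pi,\cdot)-K_k^{(2)}(\pi,\cdot)$ and $N_k^{(1)}(\pi,\cdot)-N_k^{(2)}(\pi,\cdot)$, the passage to $q_k^{(1)}-q_k^{(2)}$ is not ``uniform boundedness of the Gelfand--Levitan solution operator.'' Knowing the transformation-operator kernels only at the endpoint $x=\pi$ is the Cauchy-data (two-spectra, Borg) setting; the Gelfand--Levitan equation reconstructs $K(x,t)$ for all $x$ from the spectral function, which includes weight numbers you do not have here. The step you need is the uniform stability of recovery from Cauchy data, which is a nontrivial theorem in its own right and is what the paper cites (\cite{Bond25}, \cite{Bond20}) when it carries out this very argument for edge $m$. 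Your closing observation that the statement can instead be obtained as a degeneration of Theorem~2.3 of \cite{Bond25cycle} is, in fact, the paper's entire ``proof.''
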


Combining Lemmas~\ref{lem:difF} and \ref{lem:bound}, we arrive at the estimates \eqref{uni} for $k = \overline{1,m-1}$. It remains to prove that estimate for $k = m$.

Introduce the following functions:
\begin{gather}
    \notag
    \Delta^\Pi(\la) = \prod\limits_{n=1}^{m-1}S_n(\pi, \la), \\
    \notag
    \Delta^\Pi_1(\la) = C_1(\pi, \la)\prod\limits_{n=2}^{m-1}S_n(\pi, \la), \\
    \notag
    \Delta^K(\la) = \sum\limits_{j=1}^{m-1} S'_j(\pi, \la)\prod\limits_{n=1, n \neq j}^{m-1}S_n(\pi, \la), \\
    \notag
    \Delta^K(\la) = C'_1(\pi, \la)\prod\limits_{n=2}^{m-1}S_n(\pi, \la) + \sum\limits_{j=2}^{m-1} C_1(\pi, \la) S'_j(\pi, \la)\prod\limits_{n=2, n \neq j}^{m-1}S_n(\pi, \la).
\end{gather}

Denote $p:=(m-1)\pi$.

\begin{lem}[Corollary 3.3 from \cite{Bond25cycle}]
    The following relations hold
    \begin{gather}
    \notag
        \Delta^\Pi(\la) = \dfrac{\sin^{m-1}\rho\pi}{\rho^{m-1}} + \dfrac{1}{\rho^{m}}\int\limits_{-p}^{p}f(t)e^{-i\rho t} \, dt, \\
        \label{delta_Pi_1}
        \Delta^\Pi_1(\la) = \dfrac{\sin^{m-2}\rho\pi \cos\rho\pi}{\rho^{m-2}} + \dfrac{1}{\rho^{m-1}}\int\limits_{-p}^{p}f_1(t)e^{-i\rho t} \, dt, \\ \notag
        \Delta^K(\la) = \dfrac{(m-1)\sin^{m-2}\rho\pi \cos\rho\pi}{\rho^{m-2}} + \dfrac{1}{\rho^{m-1}}\int\limits_{-p}^{p}g(t)e^{-i\rho t} \, dt, \\ \notag
        \Delta^K_1(\la) = -\dfrac{\sin^{m-1}\rho\pi}{\rho^{m-3}} + \dfrac{(m-2)\sin^{m-3}\rho\pi \cos^2\rho\pi}{\rho^{m-3}} + \dfrac{1}{\rho^{m-2}}\int\limits_{-p}^{p}g_1(t)e^{-i\rho t} \, dt,
    \end{gather}
    where $f(t), \, f_1(t), \, g(t), \, g_1(t) \in L_2(-p, p)$.
\end{lem}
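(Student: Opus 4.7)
The plan is to substitute the integral representations from Proposition~\ref{prop:KN} (together with the standard Sturm--Liouville analogues for $C_1(\pi,\la)$ and $C'_1(\pi,\la)$) into each of the four auxiliary functions, to expand the resulting products and sums, and then to identify the leading trigonometric polynomial together with a Paley--Wiener remainder of the claimed order.

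First, I would record that, uniformly for $\mathbf{q}\in\mathbf{P}_Q$,
\[
C_1(\pi,\la)=\cos\rho\pi+\tfrac{1}{\rho}\int_0^\pi P_1(\pi,t)\sin\rho t\,dt, \qquad
C'_1(\pi,\la)=-\rho\sin\rho\pi+\int_0^\pi R_1(\pi,t)\cos\rho t\,dt,
\]
with kernels in $L_2(0,\pi)$ bounded by $C(Q)$; these follow from the same transformation-operator argument as Proposition~\ref{prop:KN}. For $\Delta^\Pi(\la)=\prod_{n=1}^{m-1}S_n(\pi,\la)$, expanding the product yields the single leading term $\rho^{-(m-1)}\sin^{m-1}\rho\pi$ and $2^{m-1}-1$ mixed terms in which $k\ge 1$ factors come from the kernel integral; each mixed term carries an overall factor $\rho^{-(m-1+k)}$, so the dominant remainder is of order $\rho^{-m}$ and absorbs the rest. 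An analogous expansion handles $\Delta^\Pi_1$, $\Delta^K$, $\Delta^K_1$; for $\Delta^K$, substituting $S'_j=\cos\rho\pi+O(\rho^{-1})$ into each summand produces $\cos\rho\pi\cdot\rho^{-(m-2)}\sin^{m-2}\rho\pi$, which after summing over $j=\overline{1,m-1}$ gives the stated coefficient $(m-1)$. For $\Delta^K_1$ the explicit factor $-\rho\sin\rho\pi$ inside $C'_1$ raises the prefactor to $\rho^{-(m-3)}$ and produces the two displayed leading polynomials: $-\rho^{-(m-3)}\sin^{m-1}\rho\pi$ from $C'_1\prod_{n\ge 2}S_n$, and $(m-2)\rho^{-(m-3)}\sin^{m-3}\rho\pi\cos^2\rho\pi$ from the $(m-2)$ summands $C_1 S'_j\prod_{n\ne 1,j}S_n$.

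The next step is to rewrite each remainder as $\rho^{-\ell}\int_{-p}^{p}h(t)e^{-i\rho t}\,dt$ with $h\in L_2(-p,p)$, where $\ell$ is the advertised power and $p=(m-1)\pi$. This is achieved by iterating product-to-sum identities of the type $\sin\rho\pi\cdot\cos\rho t=\tfrac{1}{2}[\sin\rho(\pi+t)+\sin\rho(\pi-t)]$ and its analogues. A single application converts a factor $\sin\rho\pi$ multiplied by an $L_2(0,\pi)$ cosine-integral into an $L_2(-\pi,\pi)$ integral, and each subsequent application extends the support by at most $\pi$; after at most $m-2$ iterations the resulting kernel lives in $L_2(-p,p)$, with $L_2$-norm controlled by $C(Q)$ via Young's inequality applied to the convolutions of an $L_2$- and an $L_1$-function obtained along the way.

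The main obstacle will be the careful bookkeeping in $\Delta^K_1$: one must track how the explicit $-\rho\sin\rho\pi$ inside $C'_1$ shifts the power of $\rho$ by one compared with the other three functions, and how the $\cos^2\rho\pi$-term emerges as a genuine leading contribution rather than a remainder from the summands indexed by $j=\overline{2,m-1}$. Once this combinatorial accounting is in place, the Paley--Wiener character of every remaining term follows mechanically from the trigonometric identities and the uniform $L_2$-bounds on $K_n$, $N_n$, $P_1$, $R_1$.
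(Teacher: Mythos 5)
The paper does not actually prove this lemma: it is imported verbatim as Corollary~3.3 of \cite{Bond25cycle}, so there is no internal argument to compare against. Your direct derivation is sound and is, in essence, the argument one would expect behind the cited result: substitute the transformation-operator representations of $S_n(\pi,\la)$, $S'_n(\pi,\la)$, $C_1(\pi,\la)$, $C'_1(\pi,\la)$, read off the leading trigonometric polynomial, and package the rest as a Paley--Wiener remainder of type $p=(m-1)\pi$. Your accounting of the coefficients $m-1$ and $m-2$, of the extra power of $\rho$ coming from $C'_1\sim-\rho\sin\rho\pi$, and of the two leading terms of $\Delta^K_1$ is correct, and you correctly work with the intended definition of $\Delta^K_1$ (which the paper misprints as a second formula for $\Delta^K$). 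The one place where your sketch is looser than it should be is the claim that iterated product-to-sum identities turn \emph{every} mixed term into $\rho^{-\ell}\int_{-p}^{p}h(t)e^{-i\rho t}\,dt$: the terms containing $k\ge 2$ kernel integrals carry an extra factor $\rho^{-(k-1)}$ beyond the advertised order, and $\rho^{-1}$ times a Paley--Wiener function is generally not again of that form because of the behaviour at $\rho=0$. The standard repair is global rather than term-by-term: observe that $\rho^{\ell}\bigl(\Delta^{\bullet}(\la)-(\text{leading part})\bigr)$ is an even entire function of $\rho$ of exponential type at most $p$, which your term-by-term estimates show to be in $L_2(\mathbb{R})$ on the real axis, and then invoke the Paley--Wiener theorem once for the whole remainder. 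With that adjustment your argument is complete and consistent with the cited source.
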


For the functions $\rho^m\Delta^\Pi(\la)$, $\rho^{m-1}\Delta_1^\Pi(\la)$, $\rho^{m-1}\Delta^K(\la)$, $\rho^{m-2}\Delta_1^K(\la)$, we can get the following lemma.

\begin{lem}[Corollary 3.5 from \cite{Bond25cycle}] \label{lem:difD}
The following estimates hold:
\begin{gather*}
    \| f^{(1)}(t) - f^{(2)}(t) \|_{L_2(-p, p)} \le C(Q)\sum\limits_{n=1}^{m-1}\|q^{(1)}_n - q^{(2)}_n\|_{L_2(0, \pi)}, \\
    \| f_1^{(1)}(t) - f_1^{(2)}(t) \|_{L_2(-p, p)} \le C(Q)\sum\limits_{n=1}^{m-1}\|q^{(1)}_n - q^{(2)}_n\|_{L_2(0, \pi)}, \\
    \| g^{(1)}(t) - g^{(2)}(t) \|_{L_2(-p, p)} \le C(Q)\sum\limits_{n=1}^{m-1}\|q^{(1)}_n - q^{(2)}_n\|_{L_2(0, \pi)}, \\
    \| g_1^{(1)}(t) - g_1^{(2)}(t) \|_{L_2(-p, p)} \le C(Q)\sum\limits_{n=1}^{m-1}\|q^{(1)}_n - q^{(2)}_n\|_{L_2(0, \pi)}.    
\end{gather*}
\end{lem}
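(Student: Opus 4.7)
The strategy is to treat all four kernel bounds uniformly. Each of $\Delta^\Pi$, $\Delta^\Pi_1$, $\Delta^K$, $\Delta^K_1$ is a sum of products of factors of the form ``leading trigonometric function plus $\rho^{-k}$ times a Paley--Wiener integral on $[0,\pi]$''; expanding the products, one recognises the explicit leading trigonometric part already displayed in the preceding lemma, while the residual assembles -- through successive convolutions of the individual kernels against $\cos\rho t$ or $\sin\rho t$ and shifted trigonometric exponentials -- into a single Paley--Wiener integral on $[-p, p]$, which identifies the kernel $f$ (respectively $f_1$, $g$, $g_1$). Differences between the two problems are then controlled by telescoping over the factors together with convolution estimates on the Fourier-kernel side.

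The essential analytic input is the Lipschitz stability of the transformation-operator kernels. In addition to Proposition~\ref{prop:KN}, I would first record analogous kernel representations for $C_j(\pi,\la)$ and $C'_j(\pi,\la)$ with $L_2(0,\pi)$-kernels of norm $\le C(Q)$, and then establish
$$
\| K_j^{(1)}(\pi,\cdot) - K_j^{(2)}(\pi,\cdot)\|_{L_2(0,\pi)} + \|N_j^{(1)}(\pi,\cdot) - N_j^{(2)}(\pi,\cdot)\|_{L_2(0,\pi)} \le C(Q)\|q_j^{(1)} - q_j^{(2)}\|_{L_2(0,\pi)},
$$
together with the analogous bounds for the Cauchy kernels. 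These are standard: each kernel solves a Goursat problem whose right-hand side is linear in $q_j$, so $L_2$-Lipschitz dependence on the potential follows from a contraction/Gronwall argument in the space of Goursat kernels, the $C(Q)$-factor coming from the uniform a priori bound on $\mathbf{q}\in\mathbf{P}_Q$.

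With these ingredients in place, I would apply the telescoping identity
$$
\prod_n A_n^{(1)} - \prod_n A_n^{(2)} = \sum_k A_1^{(1)}\cdots A_{k-1}^{(1)}\,(A_k^{(1)} - A_k^{(2)})\,A_{k+1}^{(2)}\cdots A_{m-1}^{(2)}
$$
to each of the four defining products, where $A_k$ ranges over the relevant $S_k(\pi,\la)$, $S'_k(\pi,\la)$, $C_1(\pi,\la)$, $C'_1(\pi,\la)$. In every summand exactly one factor is a kernel difference whose $L_2$-norm is bounded by $C(Q)\|q_k^{(1)} - q_k^{(2)}\|_{L_2(0,\pi)}$, while the remaining factors are uniformly bounded by $C(Q)$; the inequality $\|u\ast v\|_{L_2(-p,p)}\le C(p)\|u\|_{L_2}\|v\|_{L_2}$ for compactly supported $u, v$ then yields each of the four required bounds on $f^{(1)}-f^{(2)}$ and its analogues. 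The main obstacle is the bookkeeping for $g_1$: the representation of $\Delta^K_1$ has \emph{two} explicit leading trigonometric terms rather than one, and it involves $C'_1$ whose kernel decomposition differs from that of $S_j$ and $S'_j$. Identifying which combinations of factors produce each piece of the main term -- so that the residual genuinely lies in $\rho^{-(m-2)}PW(p)$ -- is the only delicate step; once this is resolved, the telescoping/convolution scheme applies uniformly across all four estimates.
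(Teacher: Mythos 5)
The paper does not prove this lemma at all: it is imported verbatim as Corollary~3.5 of \cite{Bond25cycle}, so there is no internal proof to compare against. Your reconstruction is nevertheless the standard and correct route, and it matches the method of the cited source and of the surrounding text: transformation-operator representations for $S_j$, $S_j'$, $C_1$, $C_1'$, the $L_2$-Lipschitz dependence of the kernels on the potential (which is exactly formula (22) of \cite{But21}, invoked later in this paper), telescoping of the products, and the convolution bound identifying the residual as a Paley--Wiener function on $[-p,p]$. Your outline is sound as stated; the only thing separating it from a complete proof is carrying out the bookkeeping you already flag for $\Delta^K_1$ (two leading trigonometric terms and the differing kernel structure of $C_1'$), which is routine.
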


Combining the estimates of Lemma~\ref{lem:difD} with \eqref{uni} for $k = \overline{1,m-1}$, which is already proved, we obtain
\begin{gather}
\nonumber 
\| f^{(1)}(t) - f^{(2)}(t) \|_{L_2(-p, p)} \le C(Q)\delta, \\
\label{f1_est}
\| f_1^{(1)}(t) - f_1^{(2)}(t) \|_{L_2(-p, p)} \le C(Q)\delta, \\
\nonumber 
\| g^{(1)}(t) - g^{(2)}(t) \|_{L_2(-p, p)} \le C(Q)\delta, \\
\nonumber 
\| g_1^{(1)}(t) - g_1^{(2)}(t) \|_{L_2(-p, p)} \le C(Q)\delta.    
\end{gather}

Note that
\begin{gather*}
    \Delta(\la) = \Delta^K(\la)S_m(\pi, \la) + \Delta^\Pi(\la)S'_m(\pi, \la), \\
    \Delta_1(\la) = \Delta_1^K(\la)S_m(\pi, \la) + \Delta_1^\Pi(\la)S'_m(\pi, \la).
\end{gather*}
Then, using simple calculations, we can get that
\begin{gather} \label{Sm}
    S_m(\pi, \la) = \dfrac{\Delta(\la)\Delta_1^\Pi(\la) - \Delta_1(\la)\Delta^\Pi(\la)}{B(\la)}, \\ \nonumber
    S'_m(\pi, \la) = \dfrac{\Delta_1(\la)\Delta^K(\la) - \Delta(\la)\Delta_1^K(\la)}{B(\la)} = -S_m(\pi, \la)\sum\limits_{j=2}^{m-1}\dfrac{S'_j(\pi, \la)}{S_j(\pi, \la)},
\end{gather}
where $B(\la) = \prod\limits_{k=2}^{m-1}S^2_k(\pi, \la)$.

Let us pass from the functions $S_m(\pi, \la)$ and $S'_m(\pi, \la)$ to the kernels $K_m(\pi, t)$ and $N_m(\pi, t)$ defined in Proposition~\ref{prop:KN}. For this purpose, fix numbers $\nu_n = n + i\alpha$, $\alpha > 0$, $n \in \mathbb{Z}$, and define $\mu_n = \nu_n^2$. Then $\{ e^{-i\nu_n t}\}_{n \in \mathbb Z}$ is a Riesz basis in $L_2(-\pi, \pi)$.
Introduce the Fourier coefficients
\begin{gather*}
\hat k_n = \int\limits_{-\pi}^{\pi} \mathcal{K}(t)e^{-i\nu_nt} \, dt = \mu_nS_m(\pi, \mu_n)-\nu_n\sin\nu_n\pi, \quad \mathcal{K}(t) = \left\{ \begin{aligned} 
\dfrac{1}{2}K_m(\pi, t), \ t \ge 0, \\
\dfrac{1}{2}K_m(\pi, -t), \ t < 0,
\end{aligned} \right.
\end{gather*}

Due to the Riesz-basis property, there holds
\begin{equation} \label{Riesz}
c \| \{ \hat k_n\}_{n \in \mathbb Z}\|_{l_2} \le \| K_m(\pi, t) \|_{L_2(0,\pi)} \le C \| \{ \hat k_n \}_{n \in \mathbb Z} \|_{l_2},
\end{equation}
where positive constants $c$ and $C$ depend only on $\alpha$.

Analogously, define
\begin{gather*}
\hat h_n = \int\limits_{-\pi}^{\pi} \mathcal{N}(t)e^{-i\nu_nt} \, dt = \nu_n(S'_m(\pi, \mu_n)-\cos\nu_n\pi), \quad \mathcal{N}(t) = \left\{ \begin{aligned} 
-\dfrac{1}{2i}N_m(\pi, t), \ t \ge 0, \\
\dfrac{1}{2i}N_m(\pi, -t), \ t < 0.
\end{aligned} \right.
\end{gather*}

\begin{lem}
	The following relations hold for $n \in \mathbb Z$:
	\begin{gather} \label{difkn}
	\| \{ \hat k^{(1)}_n - \hat k^{(2)}_n \} \|_{l_2} \le C(Q)\delta, \\ \label{difhn}
	\| \{ \hat h^{(1)}_n - \hat h^{(2)}_n \} \|_{l_2} \le C(Q)\delta.
	\end{gather}
\end{lem}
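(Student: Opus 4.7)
The plan is to substitute \eqref{Sm} and its $S_m'$-analogue from the same display into
$$
\hat k_n^{(1)}-\hat k_n^{(2)}=\mu_n\bigl(S_m^{(1)}(\pi,\mu_n)-S_m^{(2)}(\pi,\mu_n)\bigr), \qquad \hat h_n^{(1)}-\hat h_n^{(2)}=\nu_n\bigl(S_m^{\prime(1)}(\pi,\mu_n)-S_m^{\prime(2)}(\pi,\mu_n)\bigr),
$$
and then to use the algebraic identities $\tfrac{A_1}{B_1}-\tfrac{A_2}{B_2}=\tfrac{A_1-A_2}{B_1}+A_2\tfrac{B_2-B_1}{B_1B_2}$ and $ab-a'b'=(a-a')b+a'(b-b')$ to rewrite both right-hand sides as finite sums in which every term is the product of one ``difference factor'' taken from $\Delta^{(1)}-\Delta^{(2)}$, $\Delta_1^{(1)}-\Delta_1^{(2)}$, $\Delta^{\Pi,(1)}-\Delta^{\Pi,(2)}$, $\Delta_1^{\Pi,(1)}-\Delta_1^{\Pi,(2)}$, $\Delta^{K,(1)}-\Delta^{K,(2)}$, $\Delta_1^{K,(1)}-\Delta_1^{K,(2)}$, or $S_k^{(1)}(\pi,\cdot)-S_k^{(2)}(\pi,\cdot)$ for $k=\overline{2,m-1}$ (all evaluated at $\mu_n$), multiplied by bounded ``non-difference'' factors and by an appropriate power of $\nu_n$ coming from $\mu_n$ and from $B^{(1)}(\mu_n)B^{(2)}(\mu_n)$.

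Second, I would convert each difference factor into an $l_2$ bound at $\mu_n=\nu_n^2$. Multiplied by a suitable power of $\rho$, each of these becomes a Paley--Wiener function: this is provided by the two lemmas giving representations of $\Delta,\Delta_k$ and of $\Delta^\Pi,\Delta_1^\Pi,\Delta^K,\Delta_1^K$, and by \eqref{s_asymp} together with Proposition~\ref{prop:KN} for $S_k(\pi,\cdot)$. Lemma~\ref{lem:difF}, the four estimates of the type \eqref{f1_est}, and the already-established \eqref{uni} for $k=\overline{1,m-1}$ combine to bound the $L_2(\mathbb R)$ norms of all these PW functions by $C(Q)\delta$. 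A Plancherel--P\'olya type inequality $\|\{G(\nu_n)\}_{n\in\mathbb Z}\|_{l_2}\le C_{T,\alpha}\|G\|_{L_2(\mathbb R)}$ for any $G\in PW(T)$ then transfers this to the required $l_2$ bound at the shifted grid $\mu_n=\nu_n^2$, with constant $C(Q)$.

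The hard part will be the uniform control of the ``non-difference'' factors, especially the denominator $B(\la)=\prod_{k=2}^{m-1}S_k^2(\pi,\la)$, over $\mathbf q\in\mathbf P_Q$. The key observation is that $\mu_n=\nu_n^2$ lies strictly off the real axis while all zeros of $S_k(\pi,\cdot)$ are real; together with \eqref{s_asymp} and the identity $|\sin\nu_n\pi|=\sinh(\alpha\pi)$ this yields $|S_k(\pi,\mu_n)|\ge\sinh(\alpha\pi)/(2|\nu_n|)$ for all $|n|\ge N_0(Q)$, hence $|B^{(\ell)}(\mu_n)|\ge c(Q,\alpha)|\nu_n|^{-2(m-2)}$ on that range; the finitely many remaining $n$ are handled by continuity and a compactness argument in $\mathbf P_Q$, using again the real-axis location of the zeros. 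The numerator non-difference factors are controlled pointwise by their PW representations, using $|\sin\rho\pi|,|\cos\rho\pi|\le\cosh(\alpha\pi)$ at $\rho=\nu_n$ and the fact that PW functions with uniformly bounded $L_2(\mathbb R)$ norm are uniformly bounded on the horizontal strip $|\operatorname{Im}\rho|\le\alpha$. A careful bookkeeping of powers of $\nu_n$ then shows that the decay $|\nu_n|^{-2(m-2)}$ from $1/B$ exactly cancels the accumulated growth in the numerator, so every summand is of the form ``bounded constant $\times$ $l_2$-sequence of norm $\le C(Q)\delta$'', which gives \eqref{difkn}--\eqref{difhn}.
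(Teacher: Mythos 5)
Your proposal follows essentially the same route as the paper: it substitutes \eqref{Sm} into the difference $\hat k_n^{(1)}-\hat k_n^{(2)}=\mu_n\check S_m(\pi,\mu_n)$, decomposes via the same quotient/product identities into terms each carrying one difference factor (the paper writes this as $\frac{\mu_n\check B J}{B^{(1)}B^{(2)}}+\frac{\mu_n\check J}{B^{(2)}}$ with $J=\Delta\Delta_1^\Pi-\Delta_1\Delta^\Pi$), bounds the denominator below by $|S_k(\pi,\mu_n)|\ge C(Q)|\nu_n|^{-1}$ using $|\sin\nu_n\pi|$ bounded away from zero on the shifted line, and converts the difference factors to $l_2$ bounds of size $C(Q)\delta$ via the Paley--Wiener representations, Lemma~\ref{lem:difF}, the estimates \eqref{f1_est}, and the already-proved \eqref{uni} for $k=\overline{1,m-1}$. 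Your treatment is correct and matches the paper's, with only slightly more explicit handling of the finitely many small $n$ in the lower bound for $B$.
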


\begin{proof}
Consider the difference
$$
\hat k^{(1)}_n - \hat k^{(2)}_n = \mu_n(S_m^{(1)}(\pi, \mu_n) - S_m^{(2)}(\pi, \mu_n)).
$$
	
Denote $J(\mu_n) = \Delta(\mu_n)\Delta^{\Pi}_1(\mu_n) - \Delta_1(\mu_n)\Delta^{\Pi}(\mu_n)$ and $\check\gamma = \gamma^{(1)} - \gamma^{(2)}$. Using \eqref{Sm}, we obtain
\begin{equation} \label{relkn}
\hat k^{(1)}_n - \hat k^{(2)}_n = \dfrac{\mu_n\check B(\mu_n)J(\mu_n)}{B^{(1)}(\mu_n)B^{(2)}(\mu_n)} + \dfrac{ \mu_n\check J(\mu_n)}{B^{(2)}(\mu_n)}.
\end{equation}

Let us get some estimates of the elements of this difference.

\begin{enumerate}
	\item $\big| B^{(2)}(\mu_n) \big|$.\\
	As $|\sin\nu_n\pi| \ge C(Q)$, then $\big| S^{(2)}_k(\mu_n, \pi) \big| \ge C(Q)|\nu_n|^{-1}$. Using this estimate we pass to $|B^{(2)}(\mu_n)| \ge C(Q)|\nu_n|^{4-2m}$.
	\item $\| \{ \nu_n^{2m-3} \check B(\mu_n) \} \|_{l_2}$.\\
	The value $\check B(\mu_n)$ can be presented in the form:
	\begin{multline*}
	\check B(\mu_n) = \sum\limits_{j=2}^{m-1}\prod\limits_{k=2}^{j-1}\bigl( S^{(1)}_k(\mu_n, \pi)\bigr)^2 \bigl(S^{(1)}_j(\mu_n, \pi) - S^{(2)}_j(\mu_n, \pi)\bigr) \\ \times \bigl(S^{(1)}_j(\mu_n, \pi) + S^{(2)}_j(\mu_n, \pi)\bigr)\prod\limits_{k=j+1}^{m-1}\bigl( S^{(2)}_k(\mu_n, \pi)\bigr)^2.
	\end{multline*}
	Then, using \eqref{s_asymp} we can get:
	\begin{gather*}
	|\check B(\mu_n)| \le |\nu_n|^{3-2m} \sum_{j = 2}^{m-1}\Bigg| \int\limits_{0}^{\pi} {\Big(K^{(1)}_j(\pi, t) - K^{(2)}_j(\pi, t) \Big) \cos\nu_n t \, dt} \Bigg|, \quad n \in \mathbb Z.
	\end{gather*}
	Note that
	\begin{align*}
		\left\| \Biggl\{ \int\limits_{0}^{\pi} {\Big(K^{(1)}_j(\pi, t) - K^{(2)}_j(\pi, t) \Big) \cos\nu_n t \, dt} \Biggr\}_{n \in \mathbb Z}\right\|_{l_2} & \le C \| K_j^{(1)} - K_j^{(2)} \|_{L_2(0,\pi)} \\ & \le C(Q) \| q_j^{(1)} - q_j^{(2)} \|_{L_2(0,\pi)}, \quad j = \overline{2,m-1},
	\end{align*}
	according to formula (22) in \cite{But21}. Taking the estimates \eqref{uni} for $j = \overline{2,m-1}$ into account, we get
	$\| \{ \nu_n^{2m-3} \check B(\mu_n) \} \|_{l_2} \le C(Q)\delta$.
	\item $|\Delta^{\Pi(2)}_1(\nu_n^2)|$.
	The estimate $|\Delta^{\Pi(2)}_1(\nu_n^2)| \le C(Q)|\nu_n|^{1-m}$ follows from \eqref{delta_Pi_1}.
	\item $\| \{ \nu_n^{m-1}\check\Delta^\Pi_1(\nu_n^2) \} \|_{l_2}$
	Using \eqref{delta_Pi_1} we can get:
	\begin{gather*}
	\Delta^{\Pi (1)}_1(\mu_n) - \Delta^{\Pi (2)}_1(\mu_n) = \dfrac{1}{\rho^{m-1}}\int\limits_{-p}^{p} {(f_1^{(1)}(t)-f_1^{(2)}(t))e^{-i\nu_nt}} \, dt,
	\end{gather*}
	and, consequently, the estimate \ref{f1_est} together with the Riesz-basis property of $\{ e^{-i \nu_n t}\}_{n \in \mathbb Z}$ imply
	$$
	\| \{ \nu_n^{m-1}\check \Delta^\Pi_1(\nu_n^2) \} \|_{l_2} \le C(Q)\delta.
	$$
\end{enumerate}

We can get other estimates $|\Delta^{\Pi(2)}(\nu_n^2)|\le C(Q)|\nu_n|^{-m}$, $\| \{ \nu_n^{m}\check \Delta^\Pi(\nu_n^2) \} \|_{l_2} \le C(Q)\delta$, $|\Delta^{(1)}(\nu_n^2)|\le C(Q)|\nu_n|^{-m}$, $\| \{ \nu_n^{m}\check \Delta(\nu_n^2) \} \|_{l_2} \le C(Q)\delta$, $|\Delta^{(1)}_1(\nu_n^2)|\le C(Q)|\nu_n|^{1-m}$, $\| \{ \nu_n^{m-1}\check \Delta_1(\nu_n^2) \} \|_{l_2} \le C(Q)\delta$ in the same way. Substituting these estimates into \eqref{relkn}, we arrive at \eqref{difkn}. The estimate \eqref{difhn} is proved analogously.
\end{proof}

Now, we are ready to finish the proof of Theorem~\ref{thm_uni_stab_sp}.
The relations \eqref{Riesz} and \eqref{relkn} together imply
\begin{equation} \label{estK}
\| K^{(1)}_m(\pi, t) - K^{(2)}_m(\pi, t) \|_{L_2(0, \pi)} \le C(Q)\delta.
\end{equation}
In the same way we can get that
\begin{equation} \label{estN}
\| N^{(1)}_m(\pi, t) - N^{(2)}_m(\pi, t) \|_{L_2(0, \pi)} \le C(Q)\delta.
\end{equation}

Consider the inverse problem of recovering the potential $q_m$ from the so-called Cauchy data $\{ K_m(\pi, t), N_m(\pi, t)\}$. Obviously, this problem is equivalent to the classical Borg inverse problem by two spectra (see, e.g., \cite{FY01}). The uniform stability of the potential reconstruction from the Cauchy data has been established in \cite{Bond25} (the case of the Dirichlet boundary conditions has minor technical differences, see \cite{Bond20}):
$$
\| q_m^{(1)} - q_m^{(2)} \|_{L_2(0,\pi)} \le C(Q) \bigl(\| K^{(1)}_m(\pi, t) - K^{(2)}_m(\pi, t) \|_{L_2(0, \pi)} + \| N^{(1)}_m(\pi, t) - N^{(2)}_m(\pi, t) \|_{L_2(0, \pi)}\bigr). 
$$

Combining the latter estimate with \eqref{estK} and \eqref{estN}, we arrive at \eqref{uni} for $j = m$, which concludes the proof of Theorem~\ref{thm_uni_stab_sp}.

\section{Uniform stability of the inverse problem by spectral data} \label{sec:bound}

In this section, we provide the proof of Theorem~\ref{thm_uni_stab} on the uniform stability of the inverse problem by spectral data. 

Using the method of spectral mappings \cite{FY01, YuGr, BondConst}, we get the following expression (see (10.24) in \cite{BondMatr}):
\begin{gather}
    \label{q_rec}
    q^{(1)}_j(x) - q^{(2)}_j(x) =  2 \sum\limits_{n=1}^{\infty}\sum\limits_{k=1}^{m}\sum\limits_{i=1}^{2}(-1)^i \beta_{nkj}^{(i)} \frac{d}{dx}\bigl(S^{(1)}_j(x, \la_{nk}^{(i)})S^{(2)}_j(x, \la_{nk}^{(i)})\bigr).
\end{gather}

\begin{proof}[Proof of Theorem~\ref{thm_uni_stab}]
    From \eqref{q_rec}, we get that $q^{(1)}_j(x) - q^{(2)}_j(x) = \sum\limits_{k=1}^{6} S_{kj}(x)$,
    where
    \begin{gather}
        \label{ser1}
        S_{1j}(x) = \sum\limits_{n=1}^{\infty}\sum\limits_{k=1}^{m}(\beta^{(2)}_{nkj} - \beta^{(1)}_{nkj})\frac{d}{dx}{S^{(2)}_j}(x, \la^{(2)}_{nk}){S^{(1)}_j}(x, \la^{(2)}_{nk}), \\
        \label{ser2}
        S_{2j}(x) = \sum\limits_{n=1}^{\infty}\sum\limits_{k=1}^{m} \beta^{(1)}_{nkj}\frac{d}{dx}({S^{(2)}_j}(x, \la^{(2)}_{nk}) - {S^{(2)}_j}(x, \la^{(1)}_{nk})){S^{(1)}_j}(x, \la^{(2)}_{nk}), \\
        \label{ser3}
        S_{3j}(x) = \sum\limits_{n=1}^{\infty}\sum\limits_{k=1}^{m} \beta^{(1)}_{nkj}\frac{d}{dx}{S^{(2)}_j}(x, \la^{(1)}_{nk})({S^{(1)}_j}(x, \la^{(2)}_{nk}) - {S^{(1)}_j}(x, \la^{(1)}_{nk})), \\
        \label{ser4}
        S_{4j}(x) = \sum\limits_{n=1}^{\infty} \sum\limits_{k=1}^{m} (\beta^{(2)}_{nkj} - \beta^{(1)}_{nkj}){S^{(2)}_j}(x, \la^{(2)}_{nk})\frac{d}{dx}{S^{(1)}_j}(x, \la^{(2)}_{nk}), \\
        \label{ser5}
        S_{5j}(x) = \sum\limits_{n=1}^{\infty} \sum\limits_{k=1}^{m} \beta^{(1)}_{nkj}({S^{(2)}_j}(x, \la^{(2)}_{nk}) - {S^{(2)}_j}(x, \la^{(1)}_{nk}))\frac{d}{dx}{S^{(1)}_j}(x, \la^{(2)}_{nk}), \\
        \label{ser6}
        S_{6j}(x) = \sum\limits_{n=1}^{\infty} \sum\limits_{k=1}^{m} \beta^{(1)}_{nkj}{S^{(2)}_j}(x, \la^{(1)}_{nk})\frac{d}{dx}({S^{(1)}_j}(x, \la^{(2)}_{nk}) - {S^{(1)}_j}(x, \la^{(1)}_{nk})).
    \end{gather}

    Let us start from the series \eqref{ser1}:
    \begin{align}
        \notag
        S_{1j}(x) &= \sum\limits_{n=1}^{\infty} \left( \sum\limits_{k=1}^{m-1}\dfrac{\beta_{nkj}^{(2)} - \beta_{nkj}^{(1)}}{n} \cos nx \sin nx + \dfrac{\beta_{nmj}^{(2)} - \beta_{nmj}^{(1)}}{n-\frac{1}{2}} \cos \left(n-\dfrac{1}{2}\right)x \sin \left(n-\dfrac{1}{2}\right)x\right) \\ \notag & +\sum\limits_{n=1}^{\infty}\sum\limits_{k=1}^{m}O(\delta_{nk}).
    \end{align}

	Since $|\beta_{nkj}^{(2)} - \beta_{nkj}^{1}| \le n^2 \delta_{nk}$ and $\{ n \delta_{nk}\} \in l_2$ if $\tilde \delta < \infty$, then the series \eqref{ser1} converges in $L_2(0, \pi)$. Moreover, we have $\| N^{(2)}_j(x, .) \|_{L_2(0, x)} \le C(Q)$ and $\| K^{(1)}_j(x, .) \|_{L_2(0, x)} \le C(Q)$ by Proposition~\ref{prop:KN}, so
    $$
    \Bigg\| \sum\limits_{j=1}^{m} S_{1j}(x) \Bigg\|_{L_2(0, \pi)} \le C(Q)\tilde\delta.
    $$

    In the same way we get the similar estimate for $S_{4j}$. 
    Analogously, we obtain the estimates
    $$
    \Bigg| \sum_{j=1}^{m} S_{kj}(x) \Bigg| \le C(Q)\tilde\delta, \quad k=2, 3, 5, 6.
    $$
    Convergence and the estimates for \eqref{ser1}--\eqref{ser6} yield the claim.
\end{proof}

\medskip

\textbf{Funding}: This work was supported by Grant 24-71-10003 of the Russian Science Foundation, https://rscf.ru/en/project/24-71-10003/.

\medskip







\medskip

\noindent Egor Evgenevich Chitorkin \\
1. Institute of IT and Cybernetics, Samara National Research University,\\ 
Moskovskoye Shosse 34, Samara 443086, Russia. \\
2. Department of Mechanics and Mathematics, Saratov State University, \\
Astrakhanskaya 83, Saratov 410012, Russia. \\
e-mail: {\it chitorkin.ee@ssau.ru}

\medskip

\noindent Natalia Pavlovna Bondarenko \\
1. Department of Applied Mathematics, Samara National Research University,\\
Moskovskoye Shosse 34, Samara 443086, Russia.\\
2. Department of Mechanics and Mathematics, Saratov State University, \\
Astrakhanskaya 83, Saratov 410012, Russia. \\
3. S.M. Nikolskii Mathematical Institute, Peoples' Friendship University of Russia (RUDN University), 6 Miklukho-Maklaya Street, Moscow, 117198, Russia,\\
4. Moscow Center of Fundamental and Applied Mathematics, Lomonosov Moscow State University, Moscow 119991, Russia.\\
e-mail: {\it bondarenkonp@sgu.ru}\\


\begin{thebibliography}{99}



\bibitem{Lev84}
Levitan, B.~M. \emph{Inverse Sturm-Liouville Problems}. VNU Sci. Press, Utrecht, 1987.

\bibitem{FY01}
Freiling, G.; Yurko, V.~A. \emph{Inverse Sturm-Liouville Problems and Their Applications}. Nova Science Publishers, Huntington, NY, 2001.

\bibitem{Mar11}
Marchenko, V.~A. \emph{Sturm-Liouville Operators and Applications}. Revised edition, AMS, Providence, 2011.

\bibitem{Krav20}
Kravchenko, V.~V. \emph{Direct and Inverse Sturm-Liouville Problems}. Birkh\"auser, Cham, 2020.

\bibitem{Pok}
Pokorny, Yu.~V.; Penkin, O.~M.; Pryadiev, V.~L. et al. \emph{Differential Equations on Geometrical Graphs}. Fizmatlit, Moscow, 2005 [in Russian].

\bibitem{Berk}
Berkolaiko, G.; Carlson, R.; Fulling, S.; Kuchment, P. \emph{Quantum Graphs and Their Applications}. Contemp. Math. 415, Amer. Math. Soc., Providence, RI, 2006.

\bibitem{Bel}
Belishev, M.~I. Boundary spectral inverse problem on a class of graphs (trees) by the BC-method. \emph{Inverse Probl.} \textbf{20} (2004), 647--672.

\bibitem{Brow}
Brown, B.~M.; Weikard, R. A Borg-Levinson theorem for trees. \emph{Proc. Royal Soc. A: Math. Phys. Engin. Sci.} \textbf{461} (2005), 3231--3243.

\bibitem{YuGr}
Yurko, V.~A. Inverse spectral problems for Sturm-Liouville operators on graphs. \emph{Inverse Probl.} \textbf{21} (2005), 1075--1086.

\bibitem{YuCg}
Yurko, V.~A. Inverse spectral problems for differential operators on arbitrary compact graphs. \emph{J. Inverse Ill-Posed Probl.} \textbf{18} (2010), no.~3, 245--261.

\bibitem{YuSn}
Yurko, V.~A. Inverse spectral problems for differential operators on spatial networks. \emph{Russ. Math. Surveys} \textbf{71} (2016), no.~3, 539--584.

\bibitem{BelR}
Belishev, M.~I. Boundary control and tomography of Riemannian manifolds (the BC-method). \emph{Russ. Math. Surveys} \textbf{72} (2017), no.~4, 581--644.

\bibitem{BondPart}
Bondarenko, N.~P. Partial inverse Sturm-Liouville problems. \emph{Mathematics} \textbf{11} (2023), no.~10, 2408.

\bibitem{Piv}
Moller, M.; Pivovarchik, V. \emph{Direct and Inverse Finite-Dimensional Spectral Problems on Graphs}. Birkh\"auser, Cham, 2020.

\bibitem{Kur}
Kurasov, P. \emph{Spectral Geometry of Graphs}. Birkh\"auser, Berlin, 2024.

\bibitem{SavShk10}
Savchuk, A.~M.; Shkalikov, A.~A. Inverse problems for Sturm-Liouville operators with potentials in Sobolev spaces: Uniform stability, \emph{Funct. Anal. Appl.} 44 (2010), no. 4, 270--285.

\bibitem{SavShk13}
Savchuk, A.~M.; Shkalikov, A.~A. Uniform stability of the inverse Sturm-Liouville problem with respect to the spectral function in the scale of Sobolev spaces. \emph{Proc. Steklov Inst. Math.} \textbf{283} (2013), 181--196. \url{https://doi.org/10.1134/S0081543813080130}.

\bibitem{BondConst}
Bondarenko, N.~P. Constructive solution of the inverse spectral problem for the matrix Sturm-Liouville operator. \emph{Inverse Probl. Sci. Engin.} \textbf{28} (2020), no.~9, 1307--1330.

\bibitem{BondMatr}
Bondarenko, N.~P. Uniform stability for the matrix inverse Sturm-Liouville problems. arXiv:2506.15300.

\bibitem{XuBond}
Xu, X.~C.; Bondarenko, N.~P. Stability of the inverse scattering problem for the self-adjoint matrix Schr\"odinger operator on the half line. \emph{Stud. Appl. Math.} \textbf{149} (2022), no.~3, 815--838.

\bibitem{ButGr}
Buterin, S.~A. Functional-differential operators on geometrical graphs with global delay and inverse spectral problems. \emph{Results Math.} \textbf{78} (2023), 79.

\bibitem{Bond25cycle}
Bondarenko, N.~P. Stability of the inverse Sturm-Liouville problem on a graph with a cycle. \emph{J. Inverse Ill-Posed Probl.} (2025), published online. URL: \url{https://doi.org/10.1515/jiip-2025-0059}.

\bibitem{Trooshin}
Mochizuki, K.; Trooshin, I. On Conditional Stability of Inverse Scattering Problem on a Lasso-Shaped Graph. In: \emph{Analysis, Probability, Applications, and Computation}. Trends in Mathematics, Birkh\"auser, Cham, 2019. \url{https://doi.org/10.1007/978-3-030-04459-6\_19}.

\bibitem{BondTree}
Bondarenko, N.~P. Stability of the inverse Sturm-Liouville problem on a quantum tree, Stud. Appl. Math. 155 (2025), no. 6, Article ID e70162.

\bibitem{KravAvd}
Kravchenko, V.~V.; Avdonin, S.~A. Method for solving inverse spectral problems on quantum star graphs. \emph{J. Inv. Ill-Posed Probl.} \textbf{31} (2023), no.~1, 31--42.

\bibitem{AvdIP}
Avdonin, S.~A.; Khmelnytskaya, K.~V.; Kravchenko, V.~V. Recovery of a potential on a quantum star graph from Weyl's matrix. \emph{Inverse Probl. Imag.} \textbf{18} (2024), no.~1, 311--325.

\bibitem{AvdMmas}
Avdonin, S.~A.; Khmelnytskaya, K.~V.; Kravchenko, V.~V. Reconstruction techniques for quantum trees. \emph{Math. Meth. Appl. Sci.} \textbf{47} (2024), no.~9, 7182--7197.

\bibitem{But}
Buterin, S.~A. On the uniform stability of the recovery of sine-type functions with asymptotically separated zeros. \emph{Math. Notes} \textbf{111} (2022), no.~3, 339--353.

\bibitem{Pivovar}
Pivovarchik, V. Inverse problem for the Sturm--Liouville equation on a star-shaped graph. \emph{Math. Nachr.} \textbf{280} (2007), no.~13--14, 1595--1619.

\bibitem{Cheng12}
Cheng, Y.~H. Reconstruction of the Sturm-Liouville operator on a p-star graph with nodal data. \emph{Rocky Mount. J. Math.} \textbf{42} (2012), no.~5, 1431--1446.

\bibitem{BondMMAS}
Bondarenko, N.~P. Spectral analysis of the Sturm-Liouville operator on the star-shaped graph. \emph{Math. Meth. Appl. Sci.} \textbf{43} (2020), 471--485. \url{https://doi.org/10.1002/mma.5853}.

\bibitem{Kuz18}
Kuznetsova, M.~A. Asymptotic formulae for weight numbers of the Sturm--Liouville boundary problem on a star-shaped graph. \emph{Proc. of Saratov University: Mathematics. Mechanics. Informatics} \textbf{18} (2018), no.~1, 40--48.

\bibitem{But21}
Buterin, S. Uniform full stability of recovering convolutional perturbation of the Sturm-Liouville operator from the spectrum, \emph{J. Diff. Eqns.} 282 (2021), 67--103.


\bibitem{Bond25}
Bondarenko, N.~P. Uniform stability of the inverse problem for the non-self-adjoint Sturm-Liouville operator, \emph{Math. Nachr.} 298 (2025), no. 8, 2814--2844. 

\bibitem{Bond20}
Bondarenko, N.~P. Inverse Sturm-Liouville problem with analytical functions in the boundary condition, \emph{Open Math.} 18 (2020), no. 1, 512--528. 


\end{thebibliography}
\end{document}